\newtheorem{theorem}{Theorem}
\newtheorem{assumption}[theorem]{Assumption}
\newtheorem{definition}[theorem]{Definition}
\newtheorem{remark}[theorem]{Remark}
\newtheorem{lemma}[theorem]{Lemma}
\journal{Journal of \LaTeX\ Templates}
\begin{document}

\begin{frontmatter}
\title{Existence of weak solutions for a nonlocal pseudo-parabolic model for Brinkman two-phase flow in asymptotically flat porous media}


\author[mymainaddress]{Alaa Armiti-Juber \corref{mycorrespondingauthor}}
\cortext[mycorrespondingauthor]{Corresponding author}
\ead{alaa.armiti@mathematik.uni-stuttgart.de}

\author[mymainaddress]{Christian Rohde}

\address[mymainaddress]{Institute for Applied Analysis and Numerical Simulation, University of Stuttgart, Pfaffenwaldring 57, 70569 Stuttgart, Germany}

\begin{abstract}
We study a nonlocal evolution equation that involves a pseudo-parabolic third-order term. The equation models almost uni-directional two-phase flow in Brinkman regimes. We prove the existence of weak solutions for this equation. We also give a series of numerical examples that demonstrate the ability of the equation to support overshooting like in \cite{vanDuijn2013} and explore the behavior of solutions in various limit regimes.  
\end{abstract}

\begin{keyword}
Pseudo-parabolic equation \sep Nonlocal velocity \sep Weak solutions \sep Existence \sep Almost uni-directional flow \sep Brinkman regimes
\end{keyword}

\end{frontmatter}


\section{Introduction}
\label{intro}
We consider the homogenized flow of two incompressible and immiscible phases in a rectangular porous media domain $\Omega_{\gamma}\in\mathbb{R}^2$ with $\gamma>0$ being the ratio of the vertical length to the horizontal length (see Figure \ref{fig:Omega}). According to e.g. \cite{Helmig1997} governing equations are given by
\begin{equation}\left.
\begin{array}{rl}
\partial_{t}S+\nabla\cdot\big(\textbf{V}f(S)\big)-\beta\nabla\cdot\big(H(S)\nabla S +\tau\beta \partial_t \nabla S\big)=&0,\\
\textbf{V}=&-\lambda_{tot}(S)\nabla p,\\
\nabla\cdot\textbf{V}=&0
\end{array}\right.
\label{eq:TP}
\end{equation}
in $\Omega_{\gamma}\times (0,T)$, where $T>0$ is the end time. The unknowns here are the saturation $S=S(x,z,t) \in [0,1]$ of the wetting phase and the global pressure $p=p(x,z,t)\in \mathbb{R}$. The total velocity $\textbf{V}=\textbf{V}(x,z,t)\in\mathbb{R}^{2}$, for any $(x,z,t)\in \Omega_{\gamma}\times(0,T)$ consists of a horizontal component $(U)$ and a vertical component $(W)$, i.e., $\textbf{V}=(U,W)^T$. By $f=f(S)\in[0,1]$ we denote the given fractional flow function. Also the diffusion function $H=H(S)\in [0,\infty]$ is given and defined as $H(S)\coloneqq f(S)\lambda_{nw}(S) p_c'(S)$, where $\lambda_{nw}=\lambda_{nw}(S)\in [0,\infty)$ is the mobility of the nonwetting phase and $p_c=p_c(S)\in [0,\infty)$ is the capillary pressure function. We refer to \cite{Helmig1997} for a general introduction to two-phase flow in porous media as well as possible choices for the capillary pressure function and the mobilities, where the mobilities together with the fluids' viscosities determine the fractional flow function. The total mobility function $\lambda_{tot}=\lambda_{tot}(S) \in (0,\infty)$ is the mobility sum of both phases. Here $\beta>0$ is a small parameter and $\tau>0$ determines the flow regime. The case $\tau=0$ results in the so-called Darcy regime, while $\tau>0$ is referred to as the Brinkman regime \cite{Helmig1997}.

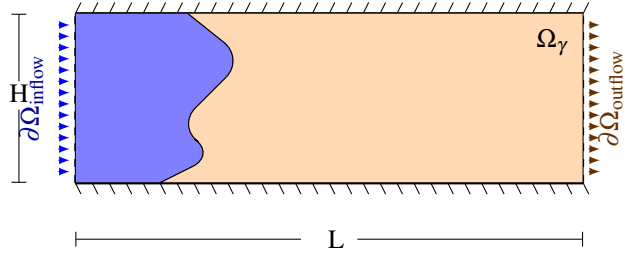
\begin{figure}
\centering
\begin{tikzpicture}[scale = 0.75,>=latex]
\draw[dashed, thick] (0,0) -- coordinate (y axis mid) (0,3);
\node[rotate=90, above=0.3cm, black!40!blue] at (y axis mid) {$\partial\Omega_{\text{inflow}}$};
\node[rotate=90, above=-7.4cm, black!60!orange] at (y axis mid) {$\partial\Omega_{\text{outflow}}$};
\draw[dashed, thick] (9,0) -- coordinate (x axis mid) (9,3);
\draw[very thick] (0,0) -- (9,0);
\draw[very thick] (0,3) -- (9,3); 

\draw[|-] (0,-1)--(4,-1) ;\draw[-|] (5,-1)--(9,-1);
\draw[very thick] (4.3,-1) node[anchor=west] {L};
\draw[|-] (-1,0)--(-1,1.3) ;\draw[-|] (-1,1.8)--(-1,3);
\draw[very thick] (-1,1.25) node[anchor=south] {H};

\draw[->,blue](-0.3,0.2)-- (-0.1,0.2);\draw[->,blue](-0.3,0.4)-- (-0.1,0.4);\draw[->,blue](-0.3,0.6)-- (-0.1,0.6);\draw[->,blue](-0.3,0.8)-- (-0.1,0.8);\draw[->,blue](-0.3,1)-- (-0.1,1);\draw[->,blue](-0.3,1.2)-- (-0.1,1.2);\draw[->,blue](-0.3,1.4)-- (-0.1,1.4);\draw[->,blue](-0.3,1.6)-- (-0.1,1.6);\draw[->,blue](-0.3,1.8)-- (-0.1,1.8);\draw[->,blue](-0.3,2)-- (-0.1,2);\draw[->,blue](-0.3,2.2)-- (-0.1,2.2);\draw[->,blue](-0.3,2.4)-- (-0.1,2.4);\draw[->,blue](-0.3,2.6)-- (-0.1,2.6);\draw[->,blue](-0.3,2.8)-- (-0.1,2.8);

\draw[->,black!60!orange](9.1,0.2)-- (9.3,0.2);\draw[->,black!60!orange](9.1,0.4)-- (9.3,0.4);\draw[->,black!60!orange](9.1,0.6)-- (9.3,0.6);\draw[->,black!60!orange](9.1,0.8)-- (9.3,0.8);\draw[->,black!60!orange](9.1,1)-- (9.3,1);\draw[->,black!60!orange](9.1,1.2)-- (9.3,1.2);\draw[->,black!60!orange](9.1,1.4)-- (9.3,1.4);\draw[->,black!60!orange](9.1,1.6)-- (9.3,1.6);\draw[->,black!60!orange](9.1,1.8)-- (9.3,1.8);\draw[->,black!60!orange](9.1,2)-- (9.3,2);\draw[->,black!60!orange](9.1,2.2)-- (9.3,2.2);\draw[->,black!60!black!60!orange](9.1,2.4)-- (9.3,2.4);\draw[->,black!60!orange](9.1,2.6)-- (9.3,2.6);\draw[->,black!60!orange](9.1,2.8)-- (9.3,2.8);

\draw[-] (0,0) -- (0.1,-0.2);\draw[-] (0.3,0) -- (0.4,-0.2);\draw[-] (0.6,0) -- (0.7,-0.2);\draw[-] (0.9,0) -- (1,-0.2);\draw[-] (1.2,0) -- (1.3,-0.2);\draw[-] (1.5,0) -- (1.6,-0.2);\draw[-] (1.8,0) -- (1.9,-0.2);\draw[-] (2.1,0) -- (2.2,-0.2);\draw[-] (2.4,0) -- (2.5,-0.2);\draw[-] (2.7,0) -- (2.8,-0.2);\draw[-] (3,0) -- (3.1,-0.2);\draw[-] (3.3,0) -- (3.4,-0.2);\draw[-] (3.6,0) -- (3.7,-0.2);\draw[-] (3.9,0) -- (4,-0.2);\draw[-] (4.2,0) -- (4.3,-0.2);\draw[-] (4.5,0) -- (4.6,-0.2);\draw[-] (4.8,0) -- (4.9,-0.2);\draw[-] (5.1,0) -- (5.2,-0.2);\draw[-] (5.4,0) -- (5.5,-0.2);\draw[-] (5.7,0) -- (5.8,-0.2);\draw[-] (6,0) -- (6.1,-0.2);\draw[-] (6.3,0) -- (6.4,-0.2);\draw[-] (6.6,0) -- (6.7,-0.2);\draw[-] (6.9,0) -- (7,-0.2);\draw[-] (7.2,0) -- (7.3,-0.2);\draw[-] (7.5,0) -- (7.6,-0.2);\draw[-] (7.8,0) -- (7.9,-0.2);\draw[-] (8.1,0) -- (8.2,-0.2);\draw[-] (8.4,0) -- (8.5,-0.2);\draw[-] (8.7,0) -- (8.8,-0.2);\draw[-] (9,0) -- (9.1,-0.2);

\draw[-] (0,3) -- (0.1,3.2);\draw[-] (0.3,3) -- (0.4,3.2);\draw[-] (0.6,3) -- (0.7,3.2);\draw[-] (0.9,3) -- (1,3.2);\draw[-] (1.2,3) -- (1.3,3.2);\draw[-] (1.5,3) -- (1.6,3.2);\draw[-] (1.8,3) -- (1.9,3.2);\draw[-] (2.1,3) -- (2.2,3.2);\draw[-] (2.4,3) -- (2.5,3.2);\draw[-] (2.7,3) -- (2.8,3.2);\draw[-] (3,3) -- (3.1,3.2);\draw[-] (3.3,3) -- (3.4,3.2);\draw[-] (3.6,3) -- (3.7,3.2);\draw[-] (3.9,3) -- (4,3.2);\draw[-] (4.2,3) -- (4.3,3.2);\draw[-] (4.5,3) -- (4.6,3.2);\draw[-] (4.8,3) -- (4.9,3.2);\draw[-] (5.1,3) -- (5.2,3.2);\draw[-] (5.4,3) -- (5.5,3.2);\draw[-] (5.7,3) -- (5.8,3.2);\draw[-] (6,3) -- (6.1,3.2);\draw[-] (6.3,3) -- (6.4,3.2);\draw[-] (6.6,3) -- (6.7,3.2);\draw[-] (6.9,3) -- (7,3.2);\draw[-] (7.2,3) -- (7.3,3.2);\draw[-] (7.5,3) -- (7.6,3.2);\draw[-] (7.8,3) -- (7.9,3.2);\draw[-] (8.1,3) -- (8.2,3.2);\draw[-] (8.4,3) -- (8.5,3.2);\draw[-] (8.7,3) -- (8.8,3.2);\draw[-] (9,3) -- (9.1,3.2);

\draw[fill=blue!50] (0,3)-- (0,0) -- (1.5,0)[rounded corners=10pt]--(2.5,0.5)--(1.8,1)--(3,2.2)--(2,3);
\draw[fill=orange!30]  (9,3)--(9,0)-- (1.5,0)[rounded corners=10pt]--(2.5,0.5)--(1.8,1)--(3,2.2) -- (2,3);
\draw[thick] (8,2.5)  node[anchor=west] {$\Omega_{\gamma}$}; 
\end{tikzpicture}
\caption{An illustration of the infiltration of the wetting phase into the domain $\Omega_{\gamma}$. For an asymptotically flat domain, the ratio $\gamma\coloneqq H/L$ tends to zero.}
\label{fig:Omega}
\end{figure}

In the case $\tau>0$, initial value problems for \eqref{eq:TP} have been already analyzed in \cite{Coclite2014}. In this paper, we focus on asymptotically flat domains. By such domains we mean to consider the limit $\gamma \rightarrow 0$ in \eqref{eq:TP}, see Figure \ref{fig:Omega}. The asymptotic limit has been formally addressed in \cite{Yortsos} for Darcy flow with $\tau=0$ and for Brinkman flow with $\tau>0$ in \cite{Armiti-Juber2018}. In the latter case the limit problem is given (after some space-time re-normalization and with a simplifying choice of a constant function $H(S)$) by the following nonlocal pseudo-parabolic equation
\begin{eqnarray}
\partial_{t}S +\partial_{x}\Bigl(f(S)U[S]\Bigr)+\partial_{z}\Bigl(f(S)W[S]\Bigr)-\beta \Delta S -\beta^2 \Delta \partial_{t}S=0\quad \text{ in }\Omega\times(0,T),
\label{eq:model}
\end{eqnarray}
where $\Omega=(0,1)\times(0,1)$ and we set $\tau=1$. Here, the saturation $S=S(x,z,t)\in [0,1]$ is the only unknown. The velocity components $U$ and $W$ are now nonlocal operators of saturation, given by
\begin{equation}
U[S]=\dfrac{\lambda_{tot}(S)}{\int_{0}^{1}\lambda_{tot}(S) dz},\quad \quad W[S]= -\partial_{x}\int_{0}^{z}U[S(\cdot,r,\cdot)]dr.
\label{eq:velocity}
\end{equation}
Note that the definition of the velocity components $U$ and $W$ in \eqref{eq:velocity} implies the incompressibility constraint
\begin{align}
 \partial_x U + \partial_z W=0.
 \label{eq:incompressible}
\end{align} 
\medskip

As mentioned above this nonlocal equation governs almost uni-directional two-phase flow in flat domains \cite{Armiti-Juber2018,Guo2014,Yortsos}. It is derived in \cite{Armiti-Juber2018} by applying asymptotic analysis, in terms of the height$-$length ratio of the domain to the Brinkman two-phase flow model \eqref{eq:TP}. This leads to a $z$-independent pressure function in the limit, a result that is usually called the vertical equilibrium assumption, see e.g. \cite{Guo2014}. This result is then used to reformulate the velocity components into nonlocal operators of saturation only, as in \eqref{eq:velocity}. In flat water aquifers, this assumption is called Dupuit-Forchheimer approximation. For example, it is utilized in \cite{vanDuijn2017} to derive a nonlocal differential equation that describes the movement of a sharp interface between fresh and salt groundwater.
\medskip

We call equation \eqref{eq:model} and \eqref{eq:velocity} as in \cite{Armiti-Juber2018}, the Brinkman Vertical Equilibrium model (BVE-model). It is shown there that this model is a proper reduction of the Brinkman two-phase model \eqref{eq:TP} in flat domains as it describes the vertical dynamics in the domain. In addition to this, it is computationally more efficient than the direct numerical simulation based on the full mixed hyperbolic-elliptic two-phase system for saturation and global pressure (see \cite{Armiti-Juber2018}). Note that the velocity in \eqref{eq:velocity} is computed from saturation directly, without solving an elliptic equation for the global pressure.
\medskip

Except of the nonlocal definition of the velocity components \eqref{eq:velocity}, model \eqref{eq:model} resembles the pseudo-parabolic model from \cite{HG1993}. This model supports the instability of overshooting of the invading wetting fronts (see e.g.~\cite{vanDuijn2013}). For the BVE-model overshooting is also observed \cite{Armiti-Juber2018}.

\medskip

Setting $\beta=0$, equation \eqref{eq:model} reduces to the nonlocal transport equation derived in \cite{Yortsos},
\begin{eqnarray}
\partial_{t}S +\partial_{x}\Bigl(f(S)U[S]\Bigr)+\partial_{z}\Bigl(f(S)W[S]\Bigr)=0\quad \quad\text{ in }\Omega\times(0,T),
\label{eq:YortsosModel}
\end{eqnarray}
where $U,\,W$ are still defined as in \eqref{eq:velocity}. This equation describes two-phase flow in flat domains of Darcy-type. We call this model the Darcy Vertical Equilibrium model (DVE-model). 
\medskip

To complete the BVE-model \eqref{eq:model}, \eqref{eq:velocity} we impose the initial and boundary conditions
\begin{align}
\begin{array}{r l l}
 S(\cdot,\cdot,0)&=S^0 &\text{ in } \Omega,\\
 S&=S_D & \text{ on }\partial \Omega\times[0,T]
\end{array}
 \label{eq:Bibc}
\end{align}
with $S^0=S^0(x,z)\in[0,1]$, $S_D=S_D(x,z,t)\in[0,1]$.
\medskip

In this paper, we are interested in proving the existence of weak solutions for the BVE-model \eqref{eq:model} and \eqref{eq:velocity} with the initial and boundary conditions \eqref{eq:Bibc}. For the pseudo-parabolic model from \cite{HG1993}, existence and uniqueness of weak solutions is proved in \cite{CaoPop2015,FanPop2010}. In fact, if the velocity component $W$ in \eqref{eq:velocity} would be Lipschitz continuous with respect to $S$, then the well-posedness of the BVE-model follows as in \cite{FanPop2010}. For the DVE-model, existence of weak solutions is still an open question due to the reduced regularity of the vertical velocity component $W$. This reduced regularity is a consequence of the differentiation operator $\partial_x$ in the definition of $W$ and the expected low regularity of a solution of a transport equation. Existence of weak solution for a regularization of the DVE-model, based on convoluting the velocity vector in \eqref{eq:velocity}, is investigated in \cite{Armiti-Juber2014}.
\medskip

The content of the paper is summarized as follows: in Section \ref{sec:assumptions} we give a list of assumptions on the BVE-model with the initial and boundary conditions \eqref{eq:Bibc}, propose a definition of weak solutions for the model and prove a few properties of the velocity components $U$ and $W$. Then, we prove in Section \ref{sec:existence} the existence of weak solutions for the model. Finally, we show in Section \ref{sec:numerical} through numerical examples the ability of the BVE-model to support overshooting fronts and investigate the behavior of the solutions in various limit regimes.

\section{Assumptions and Preliminaries}
\label{sec:assumptions}
We summarize all assumptions that are required throughout the paper. Furthermore, an appropriate notion of weak solution is presented and some preliminary for the velocity equations of weak solutions are provided. 
\begin{assumption}
\label{ass:Bexistence}
\begin{enumerate}
 \item The bounded domain $\Omega\subset\mathbb{R}^2$ has a Lipschitz continuous boundary $\partial \Omega$ and $0<T<\infty$.
 \item We require $S^0\in H^1_0(\Omega)$ and $S_D=0$.
 \item The fractional flow function $f\in C^1((0,1))$ is Lipschitz continuous, bounded, nonnegative and monotone increasing, such that there exist numbers $M,\,L>0$ with $f\leq M,\,f'\leq L$.
 \item The total mobility function $\lambda_{tot}\in C^1((0,1))$ is Lipschitz continuous, bounded and strictly positive, such that there exist numbers $a,\, M,\,L>0$ with $0<a<\lambda_{tot}\leq M$ and $\lvert\lambda_{tot}'\rvert\leq L$.
\end{enumerate} 
\end{assumption}
The BVE-model can be also extended to domains $\Omega\subset\mathbb{R}^3$, which leads to a third velocity component with a double integral. In the following, we denote $\Omega_T=\Omega\times(0,T)$.
 
\begin{definition}{(Weak Solution)}
A function $S\in H^1(0,T;H_0^{1}(\Omega))$ is called a weak solution of the BVE-model \eqref{eq:model}, \eqref{eq:velocity} and \eqref{eq:incompressible} with the initial and boundary conditions \eqref{eq:Bibc} if the following conditions hold,
\begin{enumerate}
 \item $U[S],\,W[S] \in L^2(\Omega_T)$ and 
 \begin{align}
  \int_{0}^{T}\int_{\Omega} \bigl(\partial_{t}S \phi- f(S) U[S]\partial_x \phi\bigr. & \bigr. - f(S)W[S]\partial_z\phi+ \beta \nabla S \cdot\nabla \phi \bigl) \,dx\,dz\,dt\nonumber\\& +\beta^2\int_{0}^{T}\int_{\Omega} \nabla \partial_{t}S \cdot\nabla\phi \,dx\,dz\,dt=0,
 \label{eq:def-cond1}
\end{align}
for all test functions $\phi\in L^{2}(0,T;H^{1}_0(\Omega))$.
\item The weak incompressibility property 
\begin{align}
       \int_{0}^{T}\int_{\Omega} \left( U[S]\partial_x \phi + W[S]\partial_z\phi\right) \,dx\,dz\,dt=0,
       \label{eq:def-cond2}
      \end{align}
holds for all test functions $\phi\in L^{2}(0,T;H^{1}_0(\Omega))$.
\item $S(\cdot,\cdot,0)=S^0$ almost everywhere in $\Omega$.
\end{enumerate}
\label{def:Bweaksolution}
\end{definition}

\begin{remark}
 Note that the integral $\int_0^1{\lambda_{tot}}\big(S(\cdot,z)\big)\,dz$ in the definition of the velocity components $U, ~W$ is an integral over a set of measure zero with respect to the $z$-coordinate. However, it is well-defined in the trace sense such that for all $x\in(0,1)$ there exists a bounded linear operator $T_x:H^1(\Omega) \rightarrow L^2\big((0,1)\big)$ and a constant $C>0$ satisfying
\begin{align*}
 \Vert T_x S\Vert_{L^2((0,1))}\leq C \Vert S\Vert_{H^1(\Omega)}.
\end{align*}
 \end{remark}
 
\begin{lemma}
For any weak solution of the BVE-model \eqref{eq:model}, \eqref{eq:velocity} and \eqref{eq:incompressible} the velocity components $U$ and $W$ in \eqref{eq:velocity} satisfy the following properties:
\begin{enumerate}
 \item $U$ is bounded with $ \Vert U[Q]\Vert_{L^{\infty}(\Omega_T)}\leq \frac{M}{a}$ for any function $Q\in L^2(\Omega_T)$.
 \item For any functions $Q_1,\,Q_2\in L^2(\Omega_T)$, the horizontal velocity $U$ satisfies
 \[\left\Vert U[Q_1]-U[Q_2]\right\Vert_{L^2(\Omega_T)}\leq \frac{2ML}{a^2}\Vert Q_1-Q_2\Vert_{ L^2(\Omega_T)}.\]
 \item For any function $Q\in L^2(0,T; H^1(\Omega))$, the vertical velocity $W$ satisfies the growth condition 
 \begin{align*}
 \Vert W[Q]\Vert_{L^2(\Omega_T))}\leq \frac{2ML}{a^2} \left\Vert\partial_x Q \right\Vert_{ L^2(\Omega_T)} .
\end{align*}
\end{enumerate}
\label{lem:B-more-assumptions}
\end{lemma}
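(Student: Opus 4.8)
The plan is to derive all three bounds directly from the explicit formulas \eqref{eq:velocity}, combining pointwise estimates with a final $L^2$-integration in which any $z$-averaged term is controlled by the Cauchy--Schwarz (Jensen) inequality $\bigl(\int_0^1 g\,dz\bigr)^2\le\int_0^1 g^2\,dz$. Throughout I would use the two-sided bound $a\le\lambda_{tot}\le M$, which also gives $a\le M$, and the Lipschitz bound $\lvert\lambda_{tot}(s_1)-\lambda_{tot}(s_2)\rvert\le L\lvert s_1-s_2\rvert$ coming from $\lvert\lambda_{tot}'\rvert\le L$; since $\lambda_{tot}$ is only prescribed on $(0,1)$, one first fixes a globally Lipschitz, $[a,M]$-valued extension (the relevant arguments take values in $[0,1]$ anyway). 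Note that a weak solution lies in $H^1(0,T;H^1_0(\Omega))\hookrightarrow L^2(0,T;H^1(\Omega))$, so item (3) applies to it.

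For (1), the numerator $\lambda_{tot}(Q)$ is bounded by $M$ and the denominator $\int_0^1\lambda_{tot}(Q)\,dz$ is bounded below by $a$, so $\lvert U[Q]\rvert\le M/a$ a.e. For (2), I would write $U[Q_i]=N_i/D_i$ with $N_i=\lambda_{tot}(Q_i)$ and $D_i=\int_0^1\lambda_{tot}(Q_i)\,dz$ and use the identity $\frac{N_1}{D_1}-\frac{N_2}{D_2}=\frac{N_1-N_2}{D_2}+\frac{N_1(D_2-D_1)}{D_1D_2}$. The first summand is bounded pointwise by $\frac{L}{a}\lvert Q_1-Q_2\rvert$ and the second by $\frac{ML}{a^2}\int_0^1\lvert Q_1-Q_2\rvert\,dz$; squaring with $(x+y)^2\le 2x^2+2y^2$, applying Cauchy--Schwarz to the $z$-integral, integrating over $z\in(0,1)$ (the $z$-independent second term surviving unchanged) and then over $(x,t)$ yields $\Vert U[Q_1]-U[Q_2]\Vert_{L^2(\Omega_T)}^2\le\bigl(\frac{2L^2}{a^2}+\frac{2M^2L^2}{a^4}\bigr)\Vert Q_1-Q_2\Vert_{L^2(\Omega_T)}^2$, and $a\le M$ collapses the constant to $\frac{4M^2L^2}{a^4}$.

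For (3), I would differentiate under the integral, $W[Q]=-\int_0^z\partial_x U[Q]\,dr$, and compute $\partial_x U[Q]$ by the quotient rule using the chain rule $\partial_x\lambda_{tot}(Q)=\lambda_{tot}'(Q)\,\partial_x Q$ (valid for $Q\in H^1$ since $\lambda_{tot}$ is $C^1$ and Lipschitz): $\partial_x U[Q]=\frac{\lambda_{tot}'(Q)\,\partial_x Q}{D}-\frac{\lambda_{tot}(Q)}{D^2}\int_0^1\lambda_{tot}'(Q)\,\partial_x Q\,dz$ with $D=\int_0^1\lambda_{tot}(Q)\,dz$. This gives $\lvert\partial_x U[Q]\rvert\le\frac{L}{a}\lvert\partial_x Q\rvert+\frac{ML}{a^2}\int_0^1\lvert\partial_x Q\rvert\,dz$; integrating in $r$ over $(0,z)\subset(0,1)$ and using $a\le M$, $\lvert W[Q](x,z,t)\rvert\le\frac{2ML}{a^2}\int_0^1\lvert\partial_x Q(x,r,t)\rvert\,dr$. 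Squaring, applying Cauchy--Schwarz in $r$, and integrating over $\Omega_T$ — the bound being $z$-independent, the $dz$-integration contributes only a factor $1$ — gives the stated growth estimate.

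The main obstacle is the rigorous justification in (3): interchanging $\partial_x$ with $\int_0^z$, validating the chain rule for $\lambda_{tot}\circ Q$ in $H^1(\Omega)$, and thereby establishing that $W[Q]$ is a well-defined element of $L^2(\Omega_T)$; once the formula for $\partial_x U[Q]$ is legitimate, the remaining estimates are elementary.
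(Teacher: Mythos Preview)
Your proposal is correct and follows essentially the same route as the paper: pointwise bounds from $a\le\lambda_{tot}\le M$ and $|\lambda_{tot}'|\le L$, the same difference-of-quotients decomposition for (2), the quotient/chain rule for (3), and Jensen/Cauchy--Schwarz plus Fubini to pass from $z$-averaged terms to the full $L^2(\Omega_T)$ norm. The only cosmetic difference is that in (2) the paper applies the triangle inequality directly at the $L^2$-norm level (obtaining $\tfrac{ML}{a^2}+\tfrac{ML}{a^2}$), whereas you bound pointwise, square via $(x+y)^2\le 2x^2+2y^2$, and then invoke $a\le M$ to reach the same constant; similarly in (3) the paper differentiates the quotient $\int_0^z/\int_0^1$ in one step rather than first forming $\partial_x U$ and integrating, but the resulting estimates are identical.
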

\begin{proof}
 \begin{enumerate}
  \item Using Assumption \ref{ass:Bexistence}(4) we have
   \begin{align*}
     \Vert U[Q]\Vert_{L^{\infty}(\Omega_T)} =  \left\Vert \frac{\lambda_{tot}(Q)}{\int_0^1 \lambda_{tot}(Q(\cdot,z,\cdot))\,dz}\right\Vert_{L^{\infty}(\Omega_T)}\leq \frac{M}{a}.
   \end{align*}
   
  \item Using the triangle inequality and Assumption \ref{ass:Bexistence}(4), we have
\begin{align*}
 \Vert U[Q_1]-&\,U[Q_2]\Vert_{L^2(\Omega_T)}\\=&~\left\Vert \frac{\lambda_{tot}(Q_1)}{\int_0^1\lambda_{tot}(Q_1)\,dz}- \frac{\lambda_{tot}(Q_2)}{\int_0^1\lambda_{tot}(Q_2)\,dz}\right\Vert_{L^2(\Omega_T)},\\  \leq &~\left\Vert \frac{\lambda_{tot}(Q_1)\int_0^1 \bigl(\lambda_{tot}(Q_2)-\lambda_{tot}(Q_1) \bigr)dz}{\int_0^1\lambda_{tot}(Q_1)\,dz\int_0^1\lambda_{tot}(Q_2)\,dz}\right\Vert_{L^2(\Omega_T)}\\&+ \left\Vert \frac{\int_0^1\lambda_{tot}(Q_1)\,dz \bigl(\lambda_{tot}(Q_1)-\lambda_{tot}(Q_2) \bigr)}{\int_0^1\lambda_{tot}(Q_1)\,dz\int_0^1\lambda_{tot}(Q_2)\,dz}\right\Vert_{L^2(\Omega_T)},\\ \leq &~ \frac{M}{a^2}\left\Vert\int_0^1 \lambda'_{tot}(Q)\bigl(Q_2-Q_1 \bigr)dz\right\Vert_{L^2(\Omega_T)} + \frac{ML}{a^2}\left\Vert Q_2-Q_1 \right\Vert_{L^2(\Omega_T)}, 
\end{align*}
for some $Q\in L^2(\Omega_T)$. Note that the first term in the above inequality is constant in the vertical direction. Applying Jensen's inequality, then Fubini's inequality to this term yields
\begin{align*}
 \Vert U[Q_1]-U[Q_2]\Vert_{L^2(\Omega_T)}\leq \frac{2ML}{a^2}\left\Vert Q_2-Q_1 \right\Vert_{L^2(\Omega_T)}.
\end{align*}

\item Using the Lipschitz continuity of $\lambda_{tot}$ by Assumption \ref{ass:Bexistence}(4) we can apply the chain rule on $\partial_x \lambda_{tot}(Q)$. Then, using the triangle inequality, we have for any $z\in (0,1)$
\begin{align*}
 \Vert W[Q]\Vert_{L^2(\Omega_T))}=&~ \left\Vert -\partial_{x}\frac{\int_{0}^{z} \lambda_{tot}(Q(\cdot,r,\cdot))\,dr}{\int_0^1 \lambda_{tot}(Q(\cdot,r,\cdot))\,dr}\right\Vert_{L^2(\Omega_T)} ,\\ =& ~\left\Vert \frac{\int_{0}^{z} \lambda'_{tot}(Q(\cdot,r,\cdot))\partial_xQ(\cdot,r,\cdot)\,dr\int_0^1 \lambda_{tot}(Q(\cdot,r,\cdot))dr}{\left(\int_0^1 \lambda_{tot}(Q(\cdot,r,\cdot))\,dr\right)^2}\right\Vert_{L^2(\Omega_T)}\\&+  \left\Vert \frac{\int_{0}^{1} \lambda'_{tot}(Q(\cdot,r,\cdot))\partial_xQ(\cdot,r,\cdot)\,dr\int_0^z \lambda_{tot}(Q(\cdot,r,\cdot))dr}{\left(\int_0^1 \lambda_{tot}(Q(\cdot,r,\cdot))\,dr\right)^2}\right\Vert_{L^2(\Omega_T)},\\ \leq &~ \frac{ML}{a^2}\left(\left\Vert \int_0^z \partial_xQ(\cdot,r,\cdot)\,dr \right\Vert_{L^2(\Omega_T)}+ \left\Vert \int_0^1 \partial_xQ(\cdot,r,\cdot)\,dr \right\Vert_{L^2(\Omega_T)}\right).
\end{align*}
Applying Jensen's inequality and Fubini's inequality in any yields
\begin{align*}
  \Vert W[Q]\Vert_{L^2(\Omega_T))} \leq &\, \frac{2ML}{a^2}\int_0^1 \left\Vert \partial_xQ \right\Vert_{L^2(\Omega_T)}\,dr\leq \frac{2ML}{a^2} \left\Vert \partial_xQ \right\Vert_{L^2(\Omega_T)}.
\end{align*}
 \end{enumerate}
\end{proof}

\section{Existence of Weak Solutions}
\label{sec:existence}
In this section we prove the existence of weak solutions $S\in H^1(0,T;H^{1}(\Omega))$ for the BVE-model \eqref{eq:model}, \eqref{eq:velocity} and \eqref{eq:incompressible} with the initial and boundary conditions \eqref{eq:Bibc}. In Section \ref{sec:approximated}, we approximate the time derivatives in the model using backward differences, and apply Galerkin's method to the resulting series of elliptic problems. After that, we prove the existence of discrete solutions for the approximate problem. In Section \ref{sec:a priori}, we show that the sequence of discrete solutions fulfills a set of a priori estimates. These estimates are used in Section \ref{sec:convergence} to conclude the strong convergence of the sequence. Finally, we verify that the strong limit is a weak solution of the BVE-model.

\subsection{An Approximate BVE-Model}
\label{sec:approximated}
For $N\in\mathbb{N}$, $\Delta t\coloneqq T/N$, and any $t\in(0,T)$ we use the backward difference $\frac{S(t)-S(t-\Delta t)}{\Delta t}$ to approximate the time derivative $\partial_t S$. Then, equation \eqref{eq:model} can be approximated by 
\begin{align}
  \frac{S(t)-S(t-\Delta t)}{\Delta t} + \partial_x\Bigl( f(S(t)) U[S(t)]\Bigr) &+ \partial_z\Bigl(f(S(t))W[S(t)]\Bigr)-\beta\Delta S(t) \nonumber\\&-\beta^2\,  \frac{\Delta S(t)-\Delta S(t-\Delta t)}{\Delta t}=0
 \label{eq:Belliptic}
\end{align}
for $t\in (\Delta t, T+\Delta t)$.

Let $t$ be arbitrary but fixed. Then we consider weak solutions of equation \eqref{eq:Belliptic} from the Hilbert space $V(\Omega)\coloneqq H_{0}^{1}(\Omega)$. Let a countable orthonormal basis of $V$ be given by $\{w_{i}\}_{i\in\mathbb{N}}$. By applying Galerkin's method to \eqref{eq:Belliptic}, the solution space $V(\Omega)$ is projected onto a finite dimensional space $V_M(\Omega)$ spanned by the finite number of functions $w_i,~i=1,...,M$. For $\Delta t>0$ and a positive integer $M$, we search a function
        \begin{equation}
        S^{\Delta t}_{M}(x,z,t):=\sum_{i=1}^{M}c^{\Delta t}_{M,i}(t) w_{i}(x,z),
	\label{eq:Bdiscretesolution}
        \end{equation}
where the unknown coefficients $c^{\Delta t}_{M,i}\in L^{\infty}((0,T)),\, i=1,\dots,M$, are chosen such that for almost all $t\in(0,T)$ the relation 
	\begin{align}
	\int_{\Omega}& \left(S^{\Delta t}_M(t)-S_M^{\Delta t}(t-\Delta t)\right) w_i - \Delta t f(S^{\Delta t}_M(t)) \left(U[S^{\Delta t}_M(t)]\partial_x  w_i+ W[S^{\Delta t}_M(t)]\partial_z w_i\right)dx\,dz \nonumber\\& +\int_{\Omega}\left(\beta\Delta t \nabla S^{\Delta t}_M(t)+\beta^2 \nabla (S^{\Delta t}_M(t)-S_M^{\Delta t}(t-\Delta t))\right)\cdot\nabla w_i\,dx\,dz=0
	 \label{eq:Bapproximate1*}
	\end{align}
holds for all $i=1,...,M$, with 
\begin{align}
 U[S^{\Delta t}_M(t)(x,z)]=\dfrac{\lambda_{tot}\big(S^{\Delta t}_M(t)(x,z)\big)}{\int_{0}^{1}\lambda_{tot}\big(S^{\Delta t}_M(t)(x,r)\big) dr}, W[S^{\Delta t}_M(t)(x,z)]= -\partial_{x}\int_{0}^{z}U[S^{\Delta t}_M(t)(x,r)]dr,
\label{eq:Bapproximate2*}
\end{align}
for almost all $t\in(0,T)$ and $(x,z)\in\Omega$. The function $S^{\Delta t}_M$ is also required to satisfy the weak incompressibility relation 
\begin{align}
  \int_{\Omega} U[S^{\Delta t}_M]\partial_x w_i + W[S^{\Delta t}_M]\partial_z w_i\,dx\,dz&=0, \quad \text{ for all } i=1,...,M.
\label{eq:B-incompressible1}
\end{align}
Further more we define
        \begin{equation}
         S^{\Delta t}_M(t)=S_M^0,\quad \text{ for }t \in ( -\Delta t,0] ,
         \label{eq:B-IC}
        \end{equation}
where $S_M^0$ is the $L^2$-projection of the initial data $S^0$ to the finite dimensional space $V_M(\Omega)$.

To prove the existence of a weak solution of the discrete problem \eqref{eq:Bapproximate1*}, \eqref{eq:Bapproximate2*}, \eqref{eq:B-incompressible1} and \eqref{eq:B-IC}, we need the following technical lemma on the existence of zeros of a vector field \cite{Evans}.

\begin{lemma}
Let $r>0$ and $\textbf{v}:\mathbb{R}^n\rightarrow\mathbb{R}^n$ be a continuous vector field, which satisfies $\textbf{v}(\textbf{x})\cdot \textbf{x}\geq 0$ if $|\textbf{x}|=r$. Then, there exists a point $\textbf{x}\in B(0,r)$ such that $\textbf{v}(\textbf{x})=\textbf{0}$.
\label{lem:Bvectorfield}
\end{lemma}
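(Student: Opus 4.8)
The statement to prove is Lemma~\ref{lem:Bvectorfield}, the topological fixed-point-type result on zeros of a vector field. Since this is a well-known consequence of the Brouwer fixed point theorem (it is essentially a restatement of it, and appears as a lemma in Evans' PDE book), the plan is to reduce it to Brouwer by contradiction and a normalization argument.

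\medskip

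\emph{Plan.} Argue by contradiction: suppose $\textbf{v}(\textbf{x})\neq \textbf{0}$ for every $\textbf{x}\in \overline{B(0,r)}$. Then the map
\[
\textbf{g}(\textbf{x}) \coloneqq -\,r\,\frac{\textbf{v}(\textbf{x})}{|\textbf{v}(\textbf{x})|}
\]
is well-defined and continuous on the closed ball $\overline{B(0,r)}$, and it maps $\overline{B(0,r)}$ into the sphere of radius $r$, hence into $\overline{B(0,r)}$ itself. By Brouwer's fixed point theorem, $\textbf{g}$ has a fixed point $\textbf{x}_0 \in \overline{B(0,r)}$, i.e. $\textbf{g}(\textbf{x}_0)=\textbf{x}_0$. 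Since $|\textbf{g}(\textbf{x}_0)|=r$, necessarily $|\textbf{x}_0|=r$, so the hypothesis $\textbf{v}(\textbf{x}_0)\cdot\textbf{x}_0\geq 0$ applies. On the other hand, taking the inner product of the fixed-point identity with $\textbf{x}_0$ gives
\[
r^2 = |\textbf{x}_0|^2 = \textbf{g}(\textbf{x}_0)\cdot\textbf{x}_0 = -\,\frac{r}{|\textbf{v}(\textbf{x}_0)|}\,\textbf{v}(\textbf{x}_0)\cdot\textbf{x}_0 \leq 0,
\]
which is impossible for $r>0$. This contradiction shows that $\textbf{v}$ must vanish somewhere in $B(0,r)$ (the vanishing point lies in the open ball since on the boundary $\textbf{v}(\textbf{x})\cdot\textbf{x}\geq 0$ would force $0\geq r^2$ at a zero on the sphere only if that is consistent, but more simply: wherever the zero is, it lies in $\overline{B(0,r)}$; a short separate remark handles that a boundary zero is still admissible since the claim only asserts a zero in $B(0,r)$, and one checks the zero cannot be forced onto the boundary, or one simply states the zero lies in the closed ball which the paper's later usage tolerates).

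\medskip

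\emph{Main obstacle.} There is essentially no analytic difficulty here; the only thing to be careful about is the logical structure of the contradiction and the invocation of Brouwer's theorem on the closed ball (which is homeomorphic to the standard simplex/cube, so Brouwer applies). One should also make sure continuity of $\textbf{g}$ is genuinely available — this is where the contradiction hypothesis $\textbf{v}\neq 0$ on all of $\overline{B(0,r)}$ is used, guaranteeing the denominator $|\textbf{v}(\textbf{x})|$ stays bounded away from zero on the compact set $\overline{B(0,r)}$, so that $\textbf{g}$ is continuous. If one prefers to avoid citing Brouwer and instead derive the lemma from a degree-theory or Stokes/differential-forms argument, the same normalization works but requires a smoothing step for $\textbf{v}$; since the paper cites \cite{Evans}, the cleanest route is simply to quote Brouwer. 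Hence I would present the short contradiction argument above and cite Brouwer's fixed point theorem as the one external input.
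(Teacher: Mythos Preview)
Your argument is correct and is exactly the standard Brouwer-based proof from Evans' book, which is all the paper invokes here (the lemma is stated with a citation to \cite{Evans} and no proof is given in the paper itself). The only cosmetic point is that the contradiction yields a zero in the \emph{closed} ball $\overline{B(0,r)}$ rather than the open ball; this is how Evans states it and is precisely what the subsequent application in the paper requires, so your parenthetical worry can be dropped.
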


\begin{lemma}
For any $M,\,N\in\mathbb{N}$ and for almost all $t\in (0,T)$, if $S^{\Delta t}_M(t-\Delta t) \in V_M(\Omega) $ is known, then the discrete problem \eqref{eq:Bapproximate1*}, \eqref{eq:Bapproximate2*}, \eqref{eq:B-incompressible1} and \eqref{eq:B-IC} has a solution $S^{\Delta t}_M(t)\in V_M(\Omega)$ that satisfies
\begin{align}
 &\int_{\Omega} \big(S^{\Delta t}_M(t)-S_M^{\Delta t}(t-\Delta t)\big) \phi \,dx\,dz - \Delta t\int_{\Omega} f(S^{\Delta t}_M) U[S^{\Delta t}_M]\partial_x \phi+ f(S^{\Delta t}_M)W[S^{\Delta t}_M]\partial_z\phi\,dx\,dz\nonumber\\&+\int_{\Omega}\left[\beta\Delta t \nabla S^{\Delta t}_M+\beta^2 \nabla \left(S^{\Delta t}_M(t)-S_M^{\Delta t}(t-\Delta t)\right)\right]\cdot\nabla\phi\,dx\,dz=0,
 \label{eq:Bapproximate}
\end{align}
for all $\phi \in V_M(\Omega)$.
\end{lemma}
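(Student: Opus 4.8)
The plan is to turn \eqref{eq:Bapproximate1*} into a finite-dimensional algebraic system and solve it with Lemma \ref{lem:Bvectorfield}. Fix $t$ and write $g:=S^{\Delta t}_M(t-\Delta t)\in V_M(\Omega)$ for the known datum. Since $\{w_1,\dots,w_M\}$ spans $V_M(\Omega)$ and both \eqref{eq:Bapproximate1*} and \eqref{eq:Bapproximate} are linear in the test function, it suffices to produce a coefficient vector $\mathbf{c}=(c_1,\dots,c_M)\in\mathbb{R}^M$ such that, with $u:=\sum_{j=1}^{M}c_j w_j$, equation \eqref{eq:Bapproximate1*} holds for $i=1,\dots,M$. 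First I would observe that the incompressibility constraint \eqref{eq:B-incompressible1} is then automatic for any such $u$: setting $G(x,z):=\int_0^z U[u](x,r)\,dr$, one checks, using $\lambda_{tot}\in C^1$ with $0<a<\lambda_{tot}\le M$ and $u\in H^1_0(\Omega)$, that $G\in H^1(\Omega)$, so that $\mathbf{V}:=(U[u],W[u])=(\partial_z G,-\partial_x G)$ lies in $H(\mathrm{div};\Omega)$ with $\nabla\cdot\mathbf{V}=0$; hence $\int_\Omega\bigl(U[u]\partial_x w_i+W[u]\partial_z w_i\bigr)\,dx\,dz=\int_\Omega\mathbf{V}\cdot\nabla w_i\,dx\,dz=0$ for every $w_i\in H^1_0(\Omega)$.

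Define the vector field $\mathbf{v}\colon\mathbb{R}^M\to\mathbb{R}^M$ by letting $v_i(\mathbf{c})$ be the left-hand side of \eqref{eq:Bapproximate1*} evaluated at $u=\sum_j c_j w_j$ (with $g$ fixed); a zero of $\mathbf{v}$ yields the desired solution. The step I expect to require the most care is the continuity of $\mathbf{v}$ on $\mathbb{R}^M$: the linear terms $\int_\Omega(u-g)w_i$ and $\int_\Omega\nabla u\cdot\nabla w_i$ are obviously continuous in $\mathbf{c}$, but the nonlocal convection term is not. Here I would use that $\mathbf{c}\mapsto u$ is continuous from $\mathbb{R}^M$ into $H^1_0(\Omega)$ and that all norms on the finite-dimensional space $V_M(\Omega)$ are equivalent; that $f,\lambda_{tot}\in C^1$ are bounded, so $\mathbf{c}\mapsto f(u),\ \lambda_{tot}(u),\ \lambda_{tot}'(u)$ are continuous into $L^2(\Omega)$ (via a.e.\ convergence along subsequences together with dominated convergence, using the uniform bounds); that $U[\cdot]$ is Lipschitz into $L^2(\Omega)$ by Lemma \ref{lem:B-more-assumptions}(2) and bounded by Lemma \ref{lem:B-more-assumptions}(1); and a continuity estimate for $W[u_1]-W[u_2]$ in $L^2(\Omega)$ obtained by the same computation as in the proof of Lemma \ref{lem:B-more-assumptions}(3), now exploiting the $C^1$-regularity of $\lambda_{tot}$. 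Combining these, $\mathbf{c}\mapsto f(u)U[u]$ and $\mathbf{c}\mapsto f(u)W[u]$ are continuous into $L^2(\Omega)$; pairing against the fixed functions $w_i,\partial_x w_i,\partial_z w_i\in L^2(\Omega)$ then shows each $v_i$ is continuous.

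Next I would verify the sign condition $\mathbf{v}(\mathbf{c})\cdot\mathbf{c}\ge 0$ for $|\mathbf{c}|=r$ with $r$ large, which amounts to testing \eqref{eq:Bapproximate1*} with $u$ itself. The key point is that the convection contribution vanishes: with $F(s):=\int_0^s f(\sigma)\,d\sigma$, which is Lipschitz with $F(0)=0$, one has $F(u)\in H^1_0(\Omega)$ and
\[
-\Delta t\int_\Omega f(u)\bigl(U[u]\partial_x u+W[u]\partial_z u\bigr)\,dx\,dz=-\Delta t\int_\Omega \mathbf{V}\cdot\nabla F(u)\,dx\,dz=0,
\]
since $\mathbf{V}\in H(\mathrm{div};\Omega)$ is divergence-free and $F(u)$ has zero trace. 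The remaining terms are controlled by Cauchy--Schwarz and Young's inequality,
\[
\int_\Omega(u-g)u\,dx\,dz+\beta\Delta t\|\nabla u\|_{L^2}^2+\beta^2\!\int_\Omega\nabla(u-g)\cdot\nabla u\,dx\,dz\ge\tfrac12\|u\|_{L^2}^2+\tfrac{\beta^2}{2}\|\nabla u\|_{L^2}^2-C(g),
\]
with $C(g):=\tfrac12\|g\|_{L^2}^2+\tfrac{\beta^2}{2}\|\nabla g\|_{L^2}^2<\infty$ independent of $\mathbf{c}$. Since $\{w_i\}$ is orthonormal in $V(\Omega)$, $|\mathbf{c}|^2=\|u\|_V^2$ is equivalent to $\|u\|_{H^1(\Omega)}^2$, so $\mathbf{v}(\mathbf{c})\cdot\mathbf{c}\ge c_1|\mathbf{c}|^2-C(g)$ for some $c_1>0$. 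Choosing $r$ with $c_1r^2\ge C(g)$, Lemma \ref{lem:Bvectorfield} provides $\mathbf{c}^{\ast}\in\overline{B(0,r)}$ with $\mathbf{v}(\mathbf{c}^{\ast})=0$; then $S^{\Delta t}_M(t):=\sum_j c_j^{\ast}w_j\in V_M(\Omega)$ solves \eqref{eq:Bapproximate1*} for each $i$, hence \eqref{eq:Bapproximate} for all $\phi\in V_M(\Omega)$ by linearity, and \eqref{eq:B-incompressible1} by the first step. The main obstacle is thus the continuity of the nonlocal terms, whereas the cancellation of the convection term in the energy identity is what makes the coercivity hold with no smallness restriction on $\Delta t$.
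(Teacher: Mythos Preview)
Your proposal is correct and follows essentially the same route as the paper: define a vector field on $\mathbb{R}^M$ from \eqref{eq:Bapproximate1*}, use the primitive $F(s)=\int_0^s f$ together with the divergence-free structure of $\mathbf{V}=(U,W)$ to kill the convection term when testing with $u$, and then apply Lemma \ref{lem:Bvectorfield} after a Cauchy--Young coercivity estimate. Your write-up is actually more complete than the paper's, which simply asserts continuity of the vector field from Assumption \ref{ass:Bexistence}(3),(4) and takes \eqref{eq:B-incompressible1} for granted; your explicit verification that $\mathbf{V}=(\partial_z G,-\partial_x G)$ is automatically divergence-free for any $u\in V_M(\Omega)$ and your sketch of the continuity argument for the nonlocal terms fill in steps the paper leaves implicit.
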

\begin{proof}
Before starting with the proof, we notify that $S^{\Delta t}_M(t-\Delta t)$ for $t\in(0,\Delta t]$ is well-defined by the choice of the initial condition \eqref{eq:B-IC}. Now, we define the vector field $\textbf{K}:\mathbb{R}^{M}\rightarrow\mathbb{R}^{M}$, $\textbf{K}=(k_1,...,k_M)^T$, and $\textbf{c}_M^{\Delta t}(t)=\big(c_{M,1}^{\Delta t}(t),\cdots,c_{M,M}^{\Delta t}(t)\big)^T$ of the unknown coefficients in equation \eqref{eq:Bdiscretesolution} such that, for almost all $t\in(0,T)$,
	\begin{align}
	k_i(\textbf{c}_M^{\Delta t}(t)):=&\int_{\Omega} \big(S^{\Delta t}_M(t)-S_M^{\Delta t}(t-\Delta t)\big) w_i \,dx\,dz \nonumber\\ &- \Delta t\int_{\Omega} f(S^{\Delta t}_M) U[S^{\Delta t}_M]\partial_x  w_i+ f(S^{\Delta t}_M)W[S^{\Delta t}_M]\partial_z w_i\,dx\,dz \nonumber\\&+ \int_{\Omega}\left(\beta\Delta t \nabla S^{\Delta t}_M + \beta^2 \nabla (S^{\Delta t}_M(t)-S_M^{\Delta t}(t-\Delta t))\right)\cdot\nabla w_i\,dx\,dz,
	\label{eq:Bpremitive}
	\end{align}
for all $ i=1,...,M$. The vector field $\textbf{K}$ is continuous by Assumptions \ref{ass:Bexistence}(3) and \ref{ass:Bexistence}(4) Moreover, using \eqref{eq:Bdiscretesolution}, we have
	\begin{align}
	\textbf{K}(\textbf{c}_M^{\Delta t}(t))&\cdot\textbf{c}_M^{\Delta t}(t)\nonumber\\=&\int_{\Omega} \big(S^{\Delta t}_M(t)-S_M^{\Delta t}(t-\Delta t)\big) S^{\Delta t}_M(t) \,dx\,dz\nonumber\\& - \Delta t\int_{\Omega} f(S^{\Delta t}_M)\left( U[S^{\Delta t}_M]\partial_x  S^{\Delta t}_M+ W[S^{\Delta t}_M]\partial_z S^{\Delta t}_M\right)\,dx\,dz,\nonumber \\&+ \int_{\Omega}\left(\beta\Delta t\nabla S^{\Delta t}_M(t)+\beta^2 \nabla \big(S^{\Delta t}_M(t)-S^{\Delta t}_M(t-\Delta t)\big)\right)\cdot\nabla S^{\Delta t}_M(t)\,dx\,dz.
	\label{eq:Belliptic-existence}
	\end{align}
Let $F(S)\coloneqq\int_0^{S} f(q)dq$, then the second term on the right side of \eqref{eq:Belliptic-existence} satisfies
 \begin{align*}
  \int_{\Omega} f(S^{\Delta t}_M) \big(U[S^{\Delta t}_M]\partial_x  S^{\Delta t}_M+ W[S^{\Delta t}_M]\partial_z S^{\Delta t}_M\big)\,dx\,dz&=\int_{\Omega} f(S^{\Delta t}_M)\textbf{V}[S^{\Delta t}_M]\cdot \nabla S^{\Delta t}_M\, dx\,dz,\\&=\int_{\Omega} \textbf{V}(S^{\Delta t}_M)\cdot\nabla F(S^{\Delta t}_M)\,dx\,dz,
 \end{align*}
where $\textbf{V}[S^{\Delta t}_M]\coloneqq (U[S^{\Delta t}_M],W[S^{\Delta t}_M])^T$. Using the Assumption \ref{ass:Bexistence}(2) and the property $F(0)=0$, the weak incompressibility equation \eqref{eq:B-incompressible1} with $w_i\coloneqq F(S^{\Delta t}_M)\in L^2(0,T;H_0^1(\Omega))$ 
implies that
\begin{align}
  \int_{\Omega} \textbf{V}(S^{\Delta t}_M)\cdot\nabla F(S^{\Delta t}_M)\,dx\,dz=0.
  \label{eq:useofincompress}
 \end{align}	
Substituting equation \eqref{eq:useofincompress} into \eqref{eq:Belliptic-existence}, then applying Cauchy's inequality yields
	\begin{align*}
	\textbf{K}(\textbf{c}_M^{\Delta t}(t))\cdot\textbf{c}_M^{\Delta t}(t)\geq&~\frac{1}{2} \Vert S^{\Delta t}_M\Vert^2_{L^2(\Omega)}+\left(\frac{\beta^2}{2}+\Delta t\beta \right)\Vert \nabla S^{\Delta t}_M\Vert^2_{L^2(\Omega)} -\frac{1}{2}\Vert S_M^{\Delta t}(t-\Delta t)\Vert^2_{L^2(\Omega)}\\&-\frac{\beta^2}{2}\Vert \nabla S_M^{\Delta t}(t-\Delta t)\Vert^2_{L^2(\Omega)} .
	\end{align*}
Equation \eqref{eq:Bdiscretesolution} and the orthonormality of $w_i,\,i\in \{1,\cdots,M\}$, yield
	\begin{align*}
	\textbf{K}\big(\textbf{c}_M^{\Delta t}(t)\big)\cdot\textbf{c}_M^{\Delta t}(t)\geq&~ \left(\frac{1}{2}+\frac{\beta^2}{2}+\Delta t\beta \right)|\textbf{c}_M^{\Delta t}|^2 -\frac{1}{2}\Vert S_M^{\Delta t}(t-\Delta t)\Vert_{L^2(\Omega)}\\&-\frac{\beta^2}{2}\Vert \nabla S_M^{\Delta t}(t-\Delta t)\Vert_{L^2(\Omega)} .
	\end{align*}
Note that $S_M^{\Delta t}(t-\Delta t)\in V_M(\Omega)$ is now given. Setting $r=|\textbf{c}_M^{\Delta t}(t)|$, we conclude that $\textbf{K}(\textbf{c}_M^{\Delta t}(t))\cdot\textbf{c}_M^{\Delta t}(t)\geq 0$ provided that $r$ is large enough. Thus, Lemma \ref{lem:Bvectorfield} ensures the existence of a vector $\textbf{c}_M^{\Delta t}(t) \in \mathbb{R}^M$ with $\textbf{K}(\textbf{c}_M^{\Delta t}(t))=\textbf{0}$. Using equation \eqref{eq:Bpremitive} we get the existence of an $S^{\Delta t}_M(t)$, that satisfies the discrete problem \eqref{eq:Bapproximate1*}, \eqref{eq:Bapproximate2*}, \eqref{eq:B-incompressible1} and \eqref{eq:B-IC}. 
\end{proof}

\subsection{A priori Estimates}
\label{sec:a priori}
So far, we proved the existence of a sequence $\{S^{\Delta t}_M\}_{M\in\mathbb{N},\,\Delta t>0} \subset V_M(\Omega)$ of solutions for the discrete problem \eqref{eq:Bapproximate1*}, \eqref{eq:Bapproximate2*}, \eqref{eq:B-incompressible1} and \eqref{eq:B-IC}. In the following, we prove some a priori estimates on the sequence that are essential for the convergence analysis in the next subsection. 

\begin{lemma}
If Assumption \ref{ass:Bexistence} holds, then the sequence of solutions $\{S^{\Delta t}_M\}_{M\in\mathbb{N},\,\Delta t>0}$ for the discrete problem \eqref{eq:Bapproximate1*}, \eqref{eq:Bapproximate2*}, \eqref{eq:B-incompressible1} and \eqref{eq:B-IC} satisfies
 \begin{align*}
\underset{t\in[0,T]}{\emph{ess}\sup}\left(\rVert S^{\Delta t}_M (t) \lVert^2_{L^2(\Omega)}+ \beta^2 \rVert \nabla S^{\Delta t}_M (t) \lVert^2_{L^2(\Omega)}\right)&+\beta\rVert \nabla S^{\Delta t}_M \lVert^2_{L^2(\Omega_T)}\\&\leq  \Vert S_M^0\Vert^2_{L^2(\Omega)}+\beta^2 \Vert \nabla S_M^0\Vert^2_{L^2(\Omega)}
 \end{align*} for all $M\in\mathbb{N}$ and $\Delta t>0$.
\label{lem:Bapriori1}
\end{lemma}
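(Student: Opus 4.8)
The plan is to test the discrete weak equation \eqref{eq:Bapproximate} with the solution itself, $\phi=S^{\Delta t}_M(t)\in V_M(\Omega)$, and to dispose of the convective term exactly as in the computation leading to \eqref{eq:useofincompress}. With $F(S)\coloneqq\int_0^S f(q)\,dq$, the chain rule — legitimate since $f$ is Lipschitz by Assumption \ref{ass:Bexistence}(3) and $S^{\Delta t}_M(t)\in H^1_0(\Omega)$, so that $F(S^{\Delta t}_M(t))\in H^1_0(\Omega)$ because $F(0)=0$ — gives $f(S^{\Delta t}_M)\,\textbf{V}[S^{\Delta t}_M]\cdot\nabla S^{\Delta t}_M=\textbf{V}[S^{\Delta t}_M]\cdot\nabla F(S^{\Delta t}_M)$, and the (weak) incompressibility \eqref{eq:incompressible}, \eqref{eq:B-incompressible1} then forces $\int_\Omega \textbf{V}[S^{\Delta t}_M]\cdot\nabla F(S^{\Delta t}_M)\,dx\,dz=0$. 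Hence the flux contribution drops out entirely, just as in the proof of the preceding lemma.

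For the remaining three terms I would use the elementary identity $(a-b)a=\tfrac12 a^2-\tfrac12 b^2+\tfrac12(a-b)^2\ge\tfrac12 a^2-\tfrac12 b^2$, applied once in $L^2(\Omega)$ to $\bigl(S^{\Delta t}_M(t)-S^{\Delta t}_M(t-\Delta t)\bigr)S^{\Delta t}_M(t)$ and once to $\nabla\bigl(S^{\Delta t}_M(t)-S^{\Delta t}_M(t-\Delta t)\bigr)\cdot\nabla S^{\Delta t}_M(t)$, while the term $\beta\,\Delta t\,\lVert\nabla S^{\Delta t}_M(t)\rVert_{L^2(\Omega)}^2$ is kept unchanged. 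Writing $E(t)\coloneqq\tfrac12\lVert S^{\Delta t}_M(t)\rVert_{L^2(\Omega)}^2+\tfrac{\beta^2}{2}\lVert\nabla S^{\Delta t}_M(t)\rVert_{L^2(\Omega)}^2$, this produces, for almost every $t\in(0,T)$, the one-step estimate
\begin{equation*}
E(t)+\beta\,\Delta t\,\lVert\nabla S^{\Delta t}_M(t)\rVert_{L^2(\Omega)}^2\le E(t-\Delta t).
\end{equation*}

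From here the two assertions follow by iteration and integration. Iterating the one-step estimate along $t,\,t-\Delta t,\,t-2\Delta t,\dots$ until the argument lands in $(-\Delta t,0]$ — where $E\equiv E(0)=\tfrac12\lVert S^0_M\rVert_{L^2(\Omega)}^2+\tfrac{\beta^2}{2}\lVert\nabla S^0_M\rVert_{L^2(\Omega)}^2$ by \eqref{eq:B-IC} — yields $E(t)\le E(0)$ for a.e. $t$ (a countable union of null sets is excluded), which is the essential-supremum bound, the factor $2$ only improving the stated constant. For the dissipation term, rewrite the one-step estimate as $\beta\lVert\nabla S^{\Delta t}_M(t)\rVert_{L^2(\Omega)}^2\le\tfrac1{\Delta t}\bigl(E(t-\Delta t)-E(t)\bigr)$ and integrate over $t\in(0,T)$; using $\int_0^T E(t-\Delta t)\,dt=\int_{-\Delta t}^{T-\Delta t}E(s)\,ds$ the right-hand side telescopes to $\tfrac1{\Delta t}\bigl(\int_{-\Delta t}^{0}E(s)\,ds-\int_{T-\Delta t}^{T}E(s)\,ds\bigr)\le\tfrac1{\Delta t}\cdot\Delta t\,E(0)=E(0)$, since $E\ge0$ and $E\equiv E(0)$ on $(-\Delta t,0]$. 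Thus $\beta\lVert\nabla S^{\Delta t}_M\rVert_{L^2(\Omega_T)}^2\le E(0)\le\lVert S^0_M\rVert_{L^2(\Omega)}^2+\beta^2\lVert\nabla S^0_M\rVert_{L^2(\Omega)}^2$, and adding this to the pointwise bound gives the claim. The only step needing genuine care is the vanishing of the flux term, i.e. the joint use of the chain rule for $F(S^{\Delta t}_M)$ and the divergence-free property of $\textbf{V}[S^{\Delta t}_M]$; the rest is bookkeeping with the telescoping sum.
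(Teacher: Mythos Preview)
Your proof is correct and follows essentially the same route as the paper: test \eqref{eq:Bapproximate1*} with $S^{\Delta t}_M(t)$ itself, kill the convective term via the primitive $F$ and the incompressibility relation \eqref{eq:B-incompressible1}, and telescope the remaining energy increments back to the initial data. The only cosmetic difference is that the paper integrates over $(0,\tau)$ first and then invokes summation by parts on the step function $S^{\Delta t}_M$, whereas you derive the one-step inequality $E(t)\le E(t-\Delta t)-\beta\Delta t\lVert\nabla S^{\Delta t}_M(t)\rVert_{L^2(\Omega)}^2$ via $(a-b)a\ge\tfrac12(a^2-b^2)$ and then iterate and integrate; both are the same telescoping argument.
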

\begin{proof}
Multiplying equation \eqref{eq:Bapproximate1*} by $c_{M,i}^{\Delta t}$, summing for $i=1,...,M$, then integrating from $0$ to an arbitrary $\tau\in(0,T)$ yields
\begin{align}
&\dfrac{1}{\Delta t} \int_{0}^{\tau} \int_{\Omega} \left(S_M^{\Delta t}(t)-S_M^{\Delta t}(t-\Delta t) \right) S^{\Delta t}_M(t)\,dx\,dz\,dt- \int_0^{\tau}\int_{\Omega} \textbf{V}[S^{\Delta t}_M]f(S^{\Delta t}_M)\cdot \nabla S^{\Delta t}_M \,dx\,dz\,dt \nonumber \\ &+\int_{0}^{\tau} \int_{\Omega}\beta|\nabla S^{\Delta t}_M|^2+ \dfrac{\beta^2}{\Delta t} \left(\nabla S_M^{\Delta t}(t)-\nabla S_M^{\Delta t}(t-\Delta t) \right)\cdot\nabla S^{\Delta t}_M(t)\,dx\,dz\,dt=0.
\label{eq:B1-apriori1}
\end{align}
Using summation by parts, the first term on the left side of equation \eqref{eq:B1-apriori1} satisfies 
\begin{align*}
 \frac{1}{\Delta t}\int_0^{\tau}\int_{\Omega} \left(S_M^{\Delta t}(t)-S_M^{\Delta t}(t-\Delta t) \right) S^{\Delta t}_M(t)\,dx\,dz\,dt=\,& \frac{1}{2\Delta t}\int_{{\tau}-\Delta t}^{\tau}\int_{\Omega} (S^{\Delta t}_M(t))^2\,dx\,dz\,dt\\ &- \frac{1}{2\Delta t}\int_{-\Delta t}^0 \int_{\Omega} (S^{\Delta t}_M(t))^2\,dx\,dz\,dt.
\end{align*}
Since $S^{\Delta t}_M$ is a step function in time, the above equation simplifies to
\begin{align}
\dfrac{1}{\Delta t} \int_{0}^{\tau} \int_{\Omega} \left(S_M^{\Delta t}(t)-S_M^{\Delta t}(t-\Delta t) \right)& S^{\Delta t}_M(t)\,dx\,dz\,dt\nonumber\\&=\frac{1}{2}\int_{\Omega}\big(S^{\Delta t}_M(\tau)\big)^2-(S_M^0)^2\,dx\,dz.
\label{eq:B1-apriori2}
\end{align}
Similarly, we have
\begin{align}
 \int_0^{\tau}\int_{\Omega}  \frac{\nabla S_M^{\Delta t}(t)-\nabla S_M^{\Delta t}(t-\Delta t)}{\Delta t}\cdot\,& \nabla S^{\Delta t}_M(t)\,dx\,dz\,dt\nonumber\\&=\frac{1}{2}\int_{\Omega}|\nabla S^{\Delta t}_M(\tau)|^2-|\nabla S_M^0|^2\,dx\,dz.
 \label{eq:B1-apriori3}
\end{align}
Using the primitive $F(S)= \int_0^{S} f(q)\,dq$ and the weak incompressibility of the velocity \eqref{eq:B-incompressible1}, we obtain as in equation \eqref{eq:useofincompress} the relation
 \begin{align}
 \int_0^{\tau}\int_{\Omega} \textbf{V}[S^{\Delta t}_M]f(S^{\Delta t}_M)\cdot \nabla S^{\Delta t}_M \,dx\,dz\,dt=\int_0^{\tau}\int_{\Omega} \textbf{V}[S^{\Delta t}_M]\cdot \nabla F(S^{\Delta t}_M)\,dx\,dz\,dt=0.
 \label{eq:B1-apriori4}
 \end{align}
Since the time $\tau\in(0,T)$ is arbitrarily chosen, substituting equation \eqref{eq:B1-apriori2}, \eqref{eq:B1-apriori3}, and \eqref{eq:B1-apriori4} into \eqref{eq:B1-apriori1} yields
\begin{align*}
 \underset{\tau\in[0,T]}{\text{ess} \sup}~ \Big(\Vert S^{\Delta t}_M (\tau)\Vert_{L^2(\Omega)}+ \beta^2 \Vert \nabla S^{\Delta t}_M (\tau)\Vert_{L^2(\Omega)}\Big)&+ 2\beta\int_0^T\Vert \nabla S^{\Delta t}_M \Vert_{L^2(\Omega)}\,dt \\ & \leq \Vert S_M^0\Vert^2_{L^2(\Omega)}+\beta^2 \Vert \nabla S_M^0\Vert^2_{L^2(\Omega)},
\end{align*}
 for all $M\in\mathbb{N}$ and any $\Delta t >0$.
 \end{proof} 
 
In the following lemma, we prove an estimate on the approximate time derivatives $\tfrac{S^{\Delta t}_M(t) - S_M^{\Delta t}(t-\Delta t)}{\Delta t}$ and $\tfrac{\nabla S^{\Delta t}_M(t) - \nabla S_M^{\Delta t}(t-\Delta t)}{\Delta t}$, which depend on the parameter $\beta >0$.
\begin{lemma}
If Assumption \ref{ass:Bexistence} holds, then there exists a constant $C > 0$ independent of $N,\,m \in\mathbb{N}$ such that we have for almost all $t\in(0,T)$ for all $N,\,m \in\mathbb{N}$ the estimate
\begin{align*}
   \left\Vert S^{\Delta t}_M(t) - S_M^{\Delta t}(t-\Delta t)\right\Vert^2_{L^2(\Omega_T)}+ \beta^2 \left\Vert \nabla \left(S^{\Delta t}_M(t) - S_M^{\Delta t}(t-\Delta t)\right) \right\Vert^2_{L^2(\Omega_T)}\leq \frac{C}{\beta^2} \Delta t^2.
\end{align*}
\label{lem:Bapriori2}
\end{lemma}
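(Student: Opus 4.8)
The plan is to test the discrete weak formulation \eqref{eq:Bapproximate} with the admissible choice $\phi = S^{\Delta t}_M(t) - S_M^{\Delta t}(t-\Delta t)$, which lies in $V_M(\Omega)$ for almost every $t$ since both $S^{\Delta t}_M(t)$ and $S_M^{\Delta t}(t-\Delta t)$ (the latter being a previous iterate or $S_M^0$, by \eqref{eq:B-IC}) belong to $V_M(\Omega)$. Writing $\delta S := S^{\Delta t}_M(t) - S_M^{\Delta t}(t-\Delta t)$, the mass term becomes $\|\delta S\|_{L^2(\Omega)}^2$ and the leading pseudo-parabolic term $\beta^2\nabla\delta S\cdot\nabla\delta S$ becomes $\beta^2\|\nabla\delta S\|_{L^2(\Omega)}^2$; both are nonnegative and stay on the left. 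The two remaining contributions, namely the convective flux $-\Delta t\int_\Omega f(S^{\Delta t}_M)\bigl(U[S^{\Delta t}_M]\partial_x\delta S + W[S^{\Delta t}_M]\partial_z\delta S\bigr)\,dx\,dz$ and the term $\beta\Delta t\int_\Omega\nabla S^{\Delta t}_M\cdot\nabla\delta S\,dx\,dz$, both carry an explicit factor $\Delta t$ and are moved to the right-hand side. After integrating in time over $(0,T)$ (one may equally estimate pointwise in $t$ first and integrate afterwards), the goal is to absorb the $\nabla\delta S$ dependence back into the left-hand side by Young's inequality.

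For the right-hand side I would estimate by Cauchy--Schwarz together with the velocity bounds of Lemma \ref{lem:B-more-assumptions}: $0\le f\le M$, $\|U[S^{\Delta t}_M]\|_{L^\infty(\Omega_T)}\le M/a$ (hence $\|U[S^{\Delta t}_M]\|_{L^2(\Omega_T)}\le (M/a)\sqrt T$ since $|\Omega|=1$), and the growth bound $\|W[S^{\Delta t}_M]\|_{L^2(\Omega_T)}\le \tfrac{2ML}{a^2}\|\partial_x S^{\Delta t}_M\|_{L^2(\Omega_T)}$. Combined with the a priori estimate of Lemma \ref{lem:Bapriori1}, which bounds $\|\nabla S^{\Delta t}_M\|_{L^2(\Omega_T)}$ and $\underset{t}{\mathrm{ess\,sup}}\,\|\nabla S^{\Delta t}_M(t)\|_{L^2(\Omega)}$ by $\|S_M^0\|^2_{L^2(\Omega)}+\beta^2\|\nabla S_M^0\|^2_{L^2(\Omega)}$ up to negative powers of $\beta$, the convective term and the gradient term are each bounded by $\Delta t\cdot C\,\|\nabla\delta S\|_{L^2(\Omega_T)}$ with a constant depending on the data and on $\beta$. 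One then applies Young's inequality $ab\le\tfrac{\beta^2}{2}a^2+\tfrac{1}{2\beta^2}b^2$ with $a=\|\nabla\delta S\|_{L^2(\Omega_T)}$, absorbs $\tfrac{\beta^2}{2}\|\nabla\delta S\|^2_{L^2(\Omega_T)}$ into the left-hand side, and is left with a bound of the form $\tfrac{C}{\beta^2}\Delta t^2$ on $\|\delta S\|^2_{L^2(\Omega_T)}+\tfrac{\beta^2}{2}\|\nabla\delta S\|^2_{L^2(\Omega_T)}$; relabelling constants yields the claimed estimate. The constant is independent of $M$ and $N$ because the right-hand side of Lemma \ref{lem:Bapriori1} involves only $S_M^0$, and $\|S_M^0\|^2_{L^2(\Omega)}+\beta^2\|\nabla S_M^0\|^2_{L^2(\Omega)}$ is bounded uniformly in $M$ (and $N$) by $\|S^0\|^2_{H^1_0(\Omega)}$ for the standard choice of basis.

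I expect the main obstacle to be the convective term involving the vertical velocity $W$. Unlike $U$, which is uniformly bounded in $L^\infty$, the operator $W[S^{\Delta t}_M]$ is only controlled through $\|\partial_x S^{\Delta t}_M\|_{L^2}$ via Lemma \ref{lem:B-more-assumptions}(3), and the a priori control of that quantity degenerates as $\beta\to 0$; this is exactly what forces negative powers of $\beta$ into the final estimate and prevents it from being uniform in $\beta$. It should be stressed that the third-order pseudo-parabolic term is essential to the argument: it provides the $\beta^2\|\nabla\delta S\|^2$ term on the left, without which there would be no quantity to absorb the convective and gradient contributions into and the estimate could not be closed. A secondary technical point is to keep the $L^2$-projected initial datum $S_M^0$ bounded in $H^1_0(\Omega)$ uniformly in $M$, which is immediate if the basis $\{w_i\}$ is taken, for instance, as the Dirichlet eigenfunctions of $-\Delta$, so that the $L^2$- and $H^1_0$-projections onto $V_M(\Omega)$ are simultaneously norm-nonincreasing.
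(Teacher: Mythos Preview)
Your proposal is correct and follows essentially the same approach as the paper's proof: testing the discrete formulation with $\delta S = S^{\Delta t}_M(t)-S^{\Delta t}_M(t-\Delta t)$, moving the convective and $\beta\Delta t\,\nabla S^{\Delta t}_M\cdot\nabla\delta S$ terms to the right, and absorbing the resulting $\|\nabla\delta S\|_{L^2(\Omega_T)}$ factors via Young's (Cauchy's) inequality together with Lemma~\ref{lem:B-more-assumptions} and Lemma~\ref{lem:Bapriori1}. Your additional remarks on the role of the pseudo-parabolic term and on the uniform $H^1_0$-bound for $S_M^0$ are accurate and go slightly beyond what the paper spells out.
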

\begin{proof}
Multiplying equation \eqref{eq:Bapproximate1*} by $\big(c_{M,i}^{\Delta t}(t)-c_{M,i}^{\Delta t}(t-\Delta t)\big)$, summing for $i=1,...,M$, then integrating from $0$ to $T$ yields
\begin{align*}
\lVert S^{\Delta t}_M(t)-S_M^{\Delta t}(t-\Delta t)\rVert^2_{L^2(\Omega_T)}&+\beta^2 \lVert\nabla \big(S^{\Delta t}_M(t)-S_M^{\Delta t}(t-\Delta t)\big)\rVert^2_{L^2(\Omega_T)}\\ =&~\,\Delta t\int_0^T\int_\Omega  \textbf{V}[S^{\Delta t}_M]f(S^{\Delta t}_M)\cdot\nabla\big( S^{\Delta t}_M(t)-S_M^{\Delta t}(t-\Delta t)\big)\,dx\,dz\,dt\\&-\beta\Delta t\int_0^T\int_{\Omega}  \nabla S^{\Delta t}_M \cdot\nabla \big( S^{\Delta t}_M(t)-S_M^{\Delta t}(t)\big) \,dx\,dz\,dt.
\end{align*}
Applying Cauchy's inequality to the right side of the equation above yields
\begin{align*}
\lVert S^{\Delta t}_M(t)-S_M^{\Delta t}(t-\Delta t)&\rVert^2_{L^2(\Omega_T)}+\beta^2 \lVert\nabla \big(S^{\Delta t}_M(t)-S_M^{\Delta t}(t-\Delta t)\big)\rVert^2_{L^2(\Omega_T)}\\  \leq&~ \dfrac{\Delta t^2 }{\beta^2}\Vert\textbf{V}[S^{\Delta t}_M]f(S^{\Delta t}_M)\Vert^2_{L^2(\Omega_T)}+ \dfrac{\beta^2}{4} \lVert\nabla \big(S^{\Delta t}_M(t)-S_M^{\Delta t}(t-\Delta t)\big)\rVert^2_{L^2(\Omega_T)} \\&+4\Delta t^2 \rVert \nabla S^{\Delta t}_M\lVert_{L^2(\Omega_T)} + \dfrac{\beta^2}{4} \lVert\nabla \big(S^{\Delta t}_M(t)-S_M^{\Delta t}(t-\Delta t)\big)\rVert^2_{L^2(\Omega_T)} .
\end{align*}
The growth conditions on $\textbf V=(U,W)^T$ from Lemma \ref{lem:B-more-assumptions} and the a priori estimate from Lemma \ref{lem:Bapriori1} give
\begin{align*}
\left\Vert S^{\Delta t}_M(t)\right. -& \left.S_M^{\Delta t}(t-\Delta t)\right\Vert^2_{L^2(\Omega_T)}+\dfrac{\beta^2}{2} \lVert\nabla \big(S^{\Delta t}_M(t)-S_M^{\Delta t}(t-\Delta t)\big)\rVert^2_{L^2(\Omega_T)} \\ & \leq \frac{\Delta t^2}{\beta^2}\Big( \frac{M^4|\Omega|T}{a^2}\Big)+ \frac{\Delta t^2}{\beta^2} \Big(\frac{4M^4L^2}{a^2} \Big)\Vert \nabla S^{\Delta t}_M \Vert^2_{L^2(\Omega_T)}+4\Delta t^2\Vert \nabla S^{\Delta t}_M \Vert^2_{L^2(\Omega_T)},\\& \leq \frac{\Delta t^2}{\beta^2}\Big( \frac{M^4|\Omega|T}{a^2}+ \big(\frac{4M^4L^2}{a^2} +4\beta^2\big)\big(\Vert  S_M^0 \Vert^2_{L^2(\Omega)}+ \beta^2\nabla \Vert S_M^0 \Vert^2_{L^2(\Omega)}\big)\Big),\\ & \leq C\,\frac{\Delta t^2}{\beta^2},
\end{align*}
where $C= \frac{M^4|\Omega|T}{a^2}+ \big(\frac{4M^4L^2}{a^2} +4\beta^2\big)\big(\Vert  S_M^0 \Vert^2_{L^2(\Omega)}+ \beta^2\nabla \Vert S_M^0 \Vert^2_{L^2(\Omega)}\big)$ for all $M\in\mathbb{N}$ and any $\Delta t >0$. 
\end{proof}

\subsection{Convergence Analysis}
\label{sec:convergence}
In this section, we show the convergence of the sequence $\{S^{\Delta t}_M\}_{M\in\mathbb{N},\,\Delta t>0}$, then prove that the limit is a weak solution of the BVE-model \eqref{eq:model}, \eqref{eq:velocity} and \eqref{eq:incompressible} with the initial and boundary conditions \eqref{eq:Bibc}.

\begin{theorem}
Let Assumption \ref{ass:Bexistence} be satisfied. Then, there exists a weak solution $S\in H^1(0,T;H^{1}(\Omega))$ of the initial boundary value problem \eqref{eq:model}, \eqref{eq:velocity}, \eqref{eq:incompressible} and \eqref{eq:Bibc} satisfying Definition \ref{def:Bweaksolution}.
 \label{thm:B-maintheorem2}
\end{theorem}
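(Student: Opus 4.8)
The plan is to pass to the limit in the approximate problem \eqref{eq:Bapproximate1*}--\eqref{eq:B-IC}, first sending $M\to\infty$ (or treating $M$ and $N$ jointly), using the a priori estimates from Lemma \ref{lem:Bapriori1} and Lemma \ref{lem:Bapriori2} to extract a convergent subsequence, and then identifying the limit as a weak solution in the sense of Definition \ref{def:Bweaksolution}. First I would introduce the piecewise-constant-in-time interpolant $\bar S^{\Delta t}_M$ together with the piecewise-linear interpolant $\hat S^{\Delta t}_M$ of the discrete solutions, so that the backward difference $\tfrac{S^{\Delta t}_M(t)-S^{\Delta t}_M(t-\Delta t)}{\Delta t}$ becomes $\partial_t \hat S^{\Delta t}_M$. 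Lemma \ref{lem:Bapriori1} gives a uniform bound on $\{\bar S^{\Delta t}_M\}$ in $L^\infty(0,T;H^1_0(\Omega))$, and Lemma \ref{lem:Bapriori2} (after dividing by $\Delta t^2$) gives a uniform bound on $\partial_t\hat S^{\Delta t}_M$ in $L^2(0,T;H^1_0(\Omega))$ — note the bound degenerates as $\beta\to 0$, which is exactly why $\beta>0$ is essential and why only the BVE-model, not the DVE-model, is covered. Hence $\{\hat S^{\Delta t}_M\}$ is bounded in $H^1(0,T;H^1_0(\Omega))$.

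Next I would extract, via Banach--Alaoglu and reflexivity, a subsequence with $\hat S^{\Delta t}_M \rightharpoonup S$ weakly in $H^1(0,T;H^1_0(\Omega))$ for some $S$ in that space. The crucial upgrade is strong convergence: by the Aubin--Lions--Simon lemma, the embedding $H^1(0,T;H^1_0(\Omega)) \hookrightarrow\hookrightarrow L^2(0,T;L^2(\Omega))$ (using $H^1_0(\Omega)\hookrightarrow\hookrightarrow L^2(\Omega)$ and $L^2(\Omega)$ bounding $H^1_0(\Omega)$ from below in the Aubin triple sense, or more simply compactness in $C([0,T];L^2(\Omega))$) yields $\hat S^{\Delta t}_M \to S$ strongly in $L^2(\Omega_T)$, and also $\bar S^{\Delta t}_M \to S$ strongly in $L^2(\Omega_T)$ since $\|\bar S^{\Delta t}_M - \hat S^{\Delta t}_M\|_{L^2(\Omega_T)}^2 \leq C\Delta t^2/\beta^2\to 0$ by Lemma \ref{lem:Bapriori2}. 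A further subsequence converges a.e.\ in $\Omega_T$, and since $0\le S^{\Delta t}_M$ need not be a priori constrained, I would rely instead on the continuity properties of $\lambda_{tot}$ and $f$: by continuity and the uniform $L^\infty$ bound on $U$ from Lemma \ref{lem:B-more-assumptions}(1) plus dominated convergence, $U[\bar S^{\Delta t}_M]\to U[S]$ and, using Lemma \ref{lem:B-more-assumptions}(3) together with the weak $H^1$-convergence of $\partial_x\bar S^{\Delta t}_M$, $W[\bar S^{\Delta t}_M]\rightharpoonup W[S]$ in $L^2(\Omega_T)$; similarly $f(\bar S^{\Delta t}_M)\to f(S)$ strongly in $L^2(\Omega_T)$ by the Lipschitz bound in Assumption \ref{ass:Bexistence}(3).

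With these convergences in hand I would fix a test function of the form $\phi = \psi(t)w_i$ with $\psi\in C_c^\infty(0,T)$, multiply \eqref{eq:Bapproximate1*} by $\psi(t)$, integrate in time, and pass to the limit term by term: the linear terms $\int\partial_t\hat S^{\Delta t}_M\,\phi$ and $\beta\int\nabla\bar S^{\Delta t}_M\cdot\nabla\phi$ and $\beta^2\int\nabla\partial_t\hat S^{\Delta t}_M\cdot\nabla\phi$ pass by weak convergence; the nonlinear flux term $\int f(\bar S^{\Delta t}_M)\,U[\bar S^{\Delta t}_M]\,\partial_x\phi$ passes as a product of a strongly convergent and a weakly (in fact a.e.\ plus $L^\infty$-bounded, hence strongly $L^2$) convergent factor, and likewise for the $W$-term where $f(\bar S^{\Delta t}_M)U[\bar S^{\Delta t}_M]W[\bar S^{\Delta t}_M]$... more carefully, $f(\bar S^{\Delta t}_M)\to f(S)$ strongly and $W[\bar S^{\Delta t}_M]\rightharpoonup W[S]$ weakly in $L^2$, so the product $\int f(\bar S^{\Delta t}_M)W[\bar S^{\Delta t}_M]\partial_z\phi$ converges to $\int f(S)W[S]\partial_z\phi$. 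This yields \eqref{eq:def-cond1} for $\phi=\psi w_i$, and by density of $\mathrm{span}\{w_i\}$ in $H^1_0(\Omega)$ and of finite sums $\sum\psi_j(t)w_{i_j}$ in $L^2(0,T;H^1_0(\Omega))$ it extends to all admissible $\phi$. Passing to the limit in \eqref{eq:B-incompressible1} the same way gives the weak incompressibility \eqref{eq:def-cond2}. Finally, the initial condition $S(\cdot,\cdot,0)=S^0$ follows because $\hat S^{\Delta t}_M\in C([0,T];L^2(\Omega))$ with $\hat S^{\Delta t}_M(0)=S_M^0\to S^0$ and $\hat S^{\Delta t}_M(0)\to S(0)$ by continuity of the trace at $t=0$ for the weakly convergent sequence in $H^1(0,T;L^2(\Omega))\hookrightarrow C([0,T];L^2(\Omega))$.

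The main obstacle I anticipate is the handling of the vertical velocity $W[\bar S^{\Delta t}_M]$: unlike $U$ it involves the $x$-derivative of a ratio of traces, so it is only bounded in $L^2$ (not $L^\infty$), and one only has \emph{weak} $L^2$-convergence of $\partial_x\bar S^{\Delta t}_M$. The passage to the limit in the term $\int f(\bar S^{\Delta t}_M)W[\bar S^{\Delta t}_M]\partial_z\phi$ therefore genuinely requires the strong $L^2(\Omega_T)$ convergence of $f(\bar S^{\Delta t}_M)$ (equivalently of $\bar S^{\Delta t}_M$) to compensate the merely weak convergence of $W[\bar S^{\Delta t}_M]$, and one must verify that $W[\bar S^{\Delta t}_M]\rightharpoonup W[S]$ — this uses linearity of the leading-order behaviour after the chain rule together with the strong convergence of the coefficient $\lambda_{tot}'(\bar S^{\Delta t}_M)$ and the trace-continuity from the Remark. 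Making this identification rigorous (in particular that the weak limit of $\lambda_{tot}'(\bar S^{\Delta t}_M)\partial_x\bar S^{\Delta t}_M$ is $\lambda_{tot}'(S)\partial_x S = \partial_x\lambda_{tot}(S)$, which follows since $\lambda_{tot}'(\bar S^{\Delta t}_M)\to\lambda_{tot}'(S)$ strongly in every $L^q$, $q<\infty$, by continuity and boundedness) is the technical heart of the proof; the rest is a standard Galerkin/Rothe limit argument.
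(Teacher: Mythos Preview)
Your proposal is correct and follows the same overall strategy as the paper (uniform bounds from Lemmas~\ref{lem:Bapriori1}--\ref{lem:Bapriori2}, compactness to upgrade weak $H^1(0,T;H^1_0)$-convergence to strong $L^2(\Omega_T)$-convergence, then term-by-term limit passage with density); your explicit piecewise-constant/piecewise-linear interpolants and use of Aubin--Lions are slightly more careful than the paper's direct Rellich--Kondrachov argument on the space--time cylinder, but equivalent. The only substantive technical difference is in identifying the weak $L^2$-limit of $W[\bar S^{\Delta t}_M]$: you expand via the chain rule and argue that $\lambda_{tot}'(\bar S^{\Delta t}_M)\,\partial_x\bar S^{\Delta t}_M\rightharpoonup\partial_x\lambda_{tot}(S)$ as a strong-times-weak product, whereas the paper tests against $\phi\in C^2_0(\Omega_T)$, moves $\partial_x$ onto $\phi$ by (summation) parts, and uses only the already-established strong convergence of $\int_0^z U[\bar S^{\Delta t}_M]\,dr$. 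Both routes work; the paper's avoids any appeal to continuity of $\lambda_{tot}'$ and needs no product-limit lemma, while yours stays closer to the structure of Lemma~\ref{lem:B-more-assumptions}(3). Similarly, for the weak incompressibility you pass to the limit in \eqref{eq:B-incompressible1}, whereas the paper re-derives \eqref{eq:def-cond2} directly for the limit $S$ from the definition $W[S]=-\partial_x\int_0^z U[S]\,dr$; again both are valid.
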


\begin{proof}
The uniform estimates in Lemma \ref{lem:Bapriori1} imply the existence of a weakly convergent subsequence of $\{ S^{\Delta t}_M\}_{M\in\mathbb{N},\,\Delta t>0}$, denoted in the same way, and a function $S\in L^2(0,T; H_0^1(\Omega))$ such that
\begin{align}
 S^{\Delta t}_M \rightharpoonup S \in L^2(0,T; H_0^1(\Omega)),
\end{align}
as $M\rightarrow \infty$ and $\Delta t\rightarrow 0$. In addition, Lemma \ref{lem:Bapriori2} implies $\partial_t S\in L^2(0,T; H_0^1(\Omega))$. Thus, we have the weak convergence result
  \begin{align}
 S^{\Delta t}_M \rightharpoonup S \in H^1(0,T; H_0^1(\Omega)).
 \label{eq:Bweakconv}
\end{align}
The Rellich-Kondrachov compactness theorem implies $H^1(0,T; H_0^1(\Omega))\Subset L^6(\Omega_T)$ and consequently $H^1(0,T; H_0^1(\Omega)) \Subset L^2(\Omega_T)$ due to the boundedness of the domain $\Omega_T$. Thus, from the weak convergence result \eqref{eq:Bweakconv}, we extract the strong convergence
\begin{align}
 S^{\Delta t}_M \rightarrow S \in L^2(\Omega_T).
 \label{eq:Bstrongconv}
\end{align}
This strong convergence and the a priori estimate from Lemma \ref{lem:Bapriori1} imply that the limit $S$ satisfies
\begin{align}
 S,\,\nabla S \in L^{\infty}(0,T;H_0^1(\Omega)).
\label{eq:B-limit}
\end{align}
Moreover, we have
\begin{align}
 S\in C([0,T];H^1_0(\Omega)).
 \label{eq:B-S-timecontinuous}
\end{align}

The next step in the proof is to show that the function $S\in H^1(0,T; H_0^1(\Omega))$ with $ S\in C([0,T];L^2(\Omega))$ fulfills the conditions in Definition \ref{def:Bweaksolution}. Thus, we consider an arbitrary test function $\phi\in L^2(0,T;V_m(\Omega))$ such that for a fixed integer $m$ and for almost all $t\in(0,T)$
\begin{align}
 \phi(t)=\sum_{i=1}^m c_i(t) w_i,
\end{align}
where $c_i\in L^{\infty}(0,T)$, $i=1,\cdots,m$, are given functions and $w_i\in H_0^1(\Omega)$, $i=1,\cdots,m$, belong to the orthonormal basis of the subspace $V_m(\Omega)$. Choosing $m<M$, multiplying equation \eqref{eq:Bapproximate1*} by $c_i(t)$, summing for $i=1,\cdots, m$, and then integrating with respect to time yields
\begin{align}
\dfrac{1}{\Delta t} &\int_{0}^{T} \int_{\Omega} \left(S_M^{\Delta t}(t)-S_M^{\Delta t}(t-\Delta t) \right)\phi\,dx\,dz\,dt\nonumber\\&- \int_0^{T}\int_{\Omega} U[S^{\Delta t}_M]f(S^{\Delta t}_M)\partial_x \phi \,dx\,dz\,dt - \int_0^{T}\int_{\Omega} W[S^{\Delta t}_M]f(S^{\Delta t}_M)\partial_z \phi \,dx\,dz\,dt\nonumber \\ &+\int_{0}^{T} \int_{\Omega}\nabla S^{\Delta t}_M\cdot\nabla \phi + \dfrac{\beta}{\Delta t} \nabla\left(S_M^{\Delta t}(t)-S_M^{\Delta t}(t-\Delta t) \right)\cdot \nabla \phi\,dx\,dz\,dt\nonumber\\=&~0.
\label{eq:B-weakdiscreate}
\end{align}
The strong convergence \eqref{eq:Bstrongconv} and the Lipschitz continuity of $f$ and $\lambda_{tot}$ imply
\begin{align}
\begin{array}{rll}
 \Vert f(S^{\Delta t}_M)- f(S)\Vert_{L^2(\Omega_T)}&\leq L\, \Vert S^{\Delta t}_M- S\Vert_{L^2(\Omega_T)} &\rightarrow 0,\vspace{5pt}\\
  \Vert \lambda_{tot} (S^{\Delta t}_M)- \lambda_{tot}(S)\Vert_{L^2(\Omega_T)} & \leq L\, \Vert S^{\Delta t}_M- S\Vert_{L^2(\Omega_T)}& \rightarrow 0.
\end{array}
\label{eq:B-strong-nonlinear}
\end{align}
Jensen's inequality and Fubini's theorem imply 
\begin{align*}
\int_0^T \int_0^1 \left|\int_{0}^{1} \lambda_{tot}\right.&\left.\left(S^{\Delta t}_M(t)(x,z)\right)dz-\int_{0}^{1}\lambda_{tot}\bigl(S(t)(x,z)\bigr)\,dz \right|^2 dx \,dt \\ \leq &  \int_0^T \int_0^1 \int_{0}^{1}\left| \lambda_{tot}\left(S^{\Delta t}_M(x,z,t)\right)-\lambda_{tot}\bigl(S(x,z,t)\bigr)\right|^2 dz\, dx\,dt,\\=&  \int_0^T\int_{\Omega}\left| \lambda_{tot}\left(S^{\Delta t}_M(x,z,t)\right)-\lambda_{tot}\bigl(S(x,z,t)\bigr)\right|^2 \,dx\,dz\,dt.
\end{align*}
Thus, the strong convergence of $\lambda_{tot}(S^{\Delta t}_M)$ in \eqref{eq:B-strong-nonlinear} implies that 
\begin{align}
 \int_{0}^{1} \lambda_{tot}\left(S^{\Delta t}_M(.,z,.)\right)\,dz\rightarrow \int_{0}^{1} \lambda_{tot}\bigl(S(\cdot,z,\cdot)\bigr)\,dz \quad \text{ in } L^2\big((0,1)\times(0,T)\big).
\end{align}
As the sequence $\int_{0}^{1} \lambda_{tot}\left(S^{\Delta t}_M(.,z,.)\right)\,dz$ is constant in the $z$-direction, we have 
\begin{align}
 \int_{0}^{1} \lambda_{tot}\left(S^{\Delta t}_M(.)(.,z)\right)\,dz\rightarrow \int_{0}^{1} \lambda_{tot}\bigl(S(.,z,.)\bigr)\,dz \quad \text{ in } L^2(\Omega_T).
 \label{eq:B-strong-nonlinear1}
\end{align}
To prove the strong convergence $U[S^{\Delta t}_M]\rightarrow U[S]$ in $L^2(\Omega_T)$, we use the notation $A[R](x,t)\coloneqq\int_0^1 \lambda_{tot}\left(R(x,z,t)\right)\,dz$ for almost all $x\in(0,1)$ and $t\in(0,T)$. Then, we have 
\begin{align*}
 \left\Vert U[S^{\Delta t}_M]- \right.&\left.U[S]\right\Vert_{L^2(\Omega_T)}\\=~&  \left\Vert \frac{\lambda_{tot}(S^{\Delta t}_M)}{A[S^{\Delta t}_M]}- \frac{\lambda_{tot}(S)}{A[S]}\right\Vert_{L^2(\Omega_T)}\\ = ~&   \left\Vert \frac{\lambda_{tot}(S^{\Delta t}_M) \left(A[S]-A[S^{\Delta t}_M]\right)}{A[S^{\Delta t}_M]A[S]}+ \frac{\left(\lambda_{tot}(S^{\Delta t}_M)-\lambda_{tot}(S)\right)A[S^{\Delta t}_M]}{A[S^{\Delta t}_M]A[S]}\right\Vert_{L^2(\Omega_T)}\\ \leq ~&  \left\Vert \frac{\lambda_{tot}(S^{\Delta t}_M) \left(A[S]-A[S^{\Delta t}_M]\right)}{A[S^{\Delta t}_M]A[S]}\right\Vert_{L^2(\Omega_T)}+\left\Vert \frac{\left(\lambda_{tot}(S^{\Delta t}_M)-\lambda_{tot}(S)\right)A[S^{\Delta t}_M]}{A[S^{\Delta t}_M]A[S]}\right\Vert_{L^2(\Omega_T)}.
\end{align*}
Note that $A[S^{\Delta t}_M]A[S]>a^2>0$ using Assumption \ref{ass:Bexistence}(4). Thus, we have
\begin{align*}
 \left\Vert U[S^{\Delta t}_M]\right.&\left.- U[S]\right\Vert_{L^2(\Omega_T)} \leq &\frac{M}{a^2}\left( \left\Vert A[S]-A[S^{\Delta t}_M]\right\Vert_{L^2(\Omega_T)}+ \left\Vert \lambda_{tot}(S^{\Delta t}_M)-\lambda_{tot}(S)\right\Vert_{L^2(\Omega_T)}\right).
\end{align*}
Then, the strong convergence of $\lambda_{tot}$ in \eqref{eq:B-strong-nonlinear} and of $A[S^{\Delta t}_M]$ in \eqref{eq:B-strong-nonlinear1} yield
\begin{align}
 \left\Vert U[S^{\Delta t}_M]- U[S]\right\Vert_{L^2(\Omega_T)} \rightarrow 0.
 \label{eq:B-strong-nonlinear2}
\end{align}
\indent The growth condition on the velocity component $W$ in Lemma \ref{lem:B-more-assumptions}(3) and Lemma \ref{lem:Bapriori1} imply the boundedness of $W[S^{\Delta t}_M]$ in $L^2(\Omega_T)$. Hence, up to a subsequence, there exists a function $k\in  L^2(\Omega_T) $ such that
\begin{align}
\int_0^T \int_{\Omega} W[S^{\Delta t}_M] \partial_z\phi \,dx\,dz\,dt\rightarrow \int_0^T \int_{\Omega} k \, \partial_z\phi \,dx\,dz\,dt
\label{eq:B-Wweak1}
\end{align}
for all test functions $\phi\in L^2(0,T;V_m(\Omega))$ as $m,\,N\rightarrow \infty$. Since $\bigcup_{m\in\mathbb{N}}V_m(\Omega)$ is dense in $H_0^1(\Omega)$, \eqref{eq:B-Wweak1} holds for all test functions $\phi\in L^2(0,T;H_0^1(\Omega))$. To identify the function $k$ in \eqref{eq:B-Wweak1}, we take $\phi\in L^2(0,T;C_0^{2}(\Omega))$ with $\phi(x,\cdot,\cdot)\coloneqq 0 $ for $x\in (-\Delta x,0]\cup[1, 1+\Delta x)$, where $\Delta x>0$ is spatial step size in the $x$-direction. Note that the spatial derivatives in the discrete equation \eqref{eq:Bapproximate1*} correspond to centered differences. Thus, applying summation by parts to the left side of \eqref{eq:B-Wweak1} yields  
\begin{align*}
\int_0^T \int_{\Omega} W[S^{\Delta t}_M] \partial_z\phi \,dx\,dz\,dt=&  -\int_0^T\int_{\Omega} \left(\partial_x \int_0^z U[S^{\Delta t}_M(t)(x,r)]\,dr\right) \partial_z\phi \,dx\,dz\,dt,\\ =& \int_0^T\int_{\Omega} \left(\int_0^z U[S^{\Delta t}_M(t)(x,r)]\,dr\right) \partial^2_{zx}\phi \,dx\,dz\,dt\\&- \dfrac{1}{\Delta x}\int_0^T\int_0^1 \int_{1}^{1+\Delta x} \left(\int_0^z U[S^{\Delta t}_M(t)(x,r)]\,dr\right) \partial_{z}\phi\,dx \,dz\,dt\\ &+\dfrac{1}{\Delta x}\int_0^T\int_0^1 \int_{-\Delta x}^{0} \left(\int_0^z U[S^{\Delta t}_M(t)(x,r)]\,dr\right) \partial_{z}\phi\,dx \,dz\,dt.
\end{align*}
The second and the third term on the right side of the equation above vanish by the choice of the test function $\phi$. We show in the following that the first term on the right side converges to $ \int_0^T\int_{\Omega} \left(\int_0^z U[S(t)(x,r)]\,dr\right) \partial^2_{zx}\phi \,dx\,dz\,dt$. For this, we use H\"older's inequality, Fubini's theorem, and the strong convergence of $U[S^{\Delta t}_M]$ in \eqref{eq:B-strong-nonlinear2} as follows.
\begin{align*} 
 \int_0^T \int_{\Omega}& \left(\int_0^z U[S^{\Delta t}_M](t)(x,r)-U[S](x,r,t)\,dr \right)\partial^2_{zx}\phi \,dx\,dz\,dt\\&\leq \Vert \partial^2_{zx}\phi \Vert_{L^{\infty}(\Omega_T)}\int_0^T \int_{\Omega} \int_0^z \left|U[S^{\Delta t}_M](t)(x,r)-U[S](x,r,t)\right|\,dr \,dx\,dz\,dt\\&\leq \Vert \partial^2_{zx}\phi \Vert_{L^{\infty}(\Omega_T)}\int_0^1\int_0^T \int_0^z\int_0^1  \left|U[S^{\Delta t}_M](t)(x,r)-U[S](x,r,t)\right| \,dx\,dr\,dt\,dz\\&\leq \Vert \partial^2_{zx}\phi \Vert_{L^{\infty}(\Omega_T)}\int_0^T \int_{\Omega}  \left|U[S^{\Delta t}_M](t)(x,r)-U[S](x,r,t)\right| \,dx\,dr\,dt\\& \rightarrow 0.
 \end{align*}
Thus, we have
\begin{align}
 \int_0^T \int_{\Omega} W[S^{\Delta t}_M(t)(x,z)] \partial_z&\phi \,dx\,dz\,dt \nonumber\\&\rightarrow \int_0^T\int_{\Omega} \left(\int_0^z U[S(t)(x,r)]\,dr\right) \partial^2_{zx}\phi \,dx\,dz\,dt
 \label{eq:B-W1}
\end{align}
for all test functions $\phi\in L^2(0,T;H_0^1(\Omega))$ as $m\rightarrow \infty$ and $\Delta t\rightarrow 0$. Combining the results \eqref{eq:B-Wweak1} and \eqref{eq:B-W1} yields 
\begin{align}
 \int_0^T \int_{\Omega} k(x,z,t)\, \partial_z\phi \,dx\,dz\,dt= \int_0^T\int_{\Omega} \left(\int_0^z U[S(t)(x,r)]\,dr\right) \partial^2_{zx}\phi \,dx\,dz.
\end{align}
Thus, we have
\begin{align}
k(x,z,t)=-\partial_x \int_0^z U[S(x,r,t)]dr=W[S(x,z,t)]\in L^2(\Omega_T),
\label{eq:B-WderU}
\end{align}
for almost all $(x,z)\in\Omega$ and $t\in(0,T)$. Substituting \eqref{eq:B-WderU} into \eqref{eq:B-Wweak1} yields the required convergence
 \begin{align}
  \int_0^T \int_{\Omega} W[S^{\Delta t}_M] \partial_z\phi \,dx\,dz \rightarrow \int_0^T\int_{\Omega} W[S] \partial_{z}\phi \,dx\,dz\,dt.
  \label{eq:B-weak-W}
 \end{align}
Now, we prove the strong convergence of the product $U[S^{\Delta t}_M]f(S^{\Delta t}_M)$, i.e., 
\begin{align}
\Vert U[S^{\Delta t}_M]&f(S^{\Delta t}_M)-  U[S]f(S)\Vert_{L^2(\Omega_T)}\nonumber\\=\,& ~\left\Vert U[S]\left(f(S^{\Delta t}_M)-f(S)\right) + f(S^{\Delta t}_M)\left(U[S^{\Delta t}_M]- U[S]\right)\right\Vert_{L^2(\Omega_T)},\nonumber\\ \leq&~ \left\Vert U[S]\left(f(S^{\Delta t}_M)-f(S)\right)\right\Vert_{L^2(\Omega_T)} + \left\Vert f(S^{\Delta t}_M)\left(U[S^{\Delta t}_M]- U[S]\right)\right\Vert_{L^2(\Omega_T)}.
\label{eq:Brink-conv1}
\end{align}
The boundedness of $U$ in the space $L^{\infty}(\Omega_T)$ by Lemma \ref{lem:B-more-assumptions}(1) and the strong convergence of $f$ in \eqref{eq:B-strong-nonlinear} imply
\begin{align}
\left\Vert  U[S]\left(f(S^{\Delta t}_M)-f(S)\right)\right\Vert_{L^2(\Omega_T)}\leq ~ \frac{M}{a}\Vert  f(S^{\Delta t}_M)-f(S)\Vert_{L^2(\Omega_T)} \rightarrow 0.
\label{eq:Brink-conv2}
\end{align}
The boundedness of $f$ in $L^{\infty}(\Omega_T)$ by Assumption \ref{ass:Bexistence}(3) and the strong convergence of $U$ in \eqref{eq:B-strong-nonlinear1} lead to
\begin{align}
 \left\Vert f(S^{\Delta t}_M)\left(U[S^{\Delta t}_M]- U[S]\right)\right\Vert_{L^2(\Omega_T)}\leq ~ M \Vert U[S^{\Delta t}_M]- U[S]\Vert_{L^2(\Omega_T)}\rightarrow 0.
 \label{eq:Brink-conv3}
\end{align}
Substituting \eqref{eq:Brink-conv2} and \eqref{eq:Brink-conv3} into \eqref{eq:Brink-conv1} yields
\begin{align*}
 U[S^{\Delta t}_M]f(S^{\Delta t}_M)\rightarrow  U[S]f(S)\quad \text{ in } L^2(\Omega_T).
\end{align*}

\indent We also prove the weak convergence of the product $W[S^{\Delta t}_M]f(S^{\Delta t}_M)\in L^2(\Omega_T)$. The boundedness of the fractional flow function $f\in L^{\infty}(\Omega_T)$, the growth condition on $W$ in Lemma \ref{lem:B-more-assumptions}(3), and Lemma \ref{lem:Bapriori1} imply the existence of a constant $C > 0$ such that 
\begin{align*}
 \Vert W[S^{\Delta t}_M]f(S^{\Delta t}_M)\Vert_{L^2(\Omega_T)} \leq \frac{2M^2L}{a^2}\Vert\partial_x S^{\Delta t}_M\Vert_{L^2(\Omega_T)} \leq C.
\end{align*}
Hence, there exists a function $q\in L^2(\Omega_T)$ such that, up to a subsequence,
\begin{align}
 \int_0^T \int_{\Omega}  W[S^{\Delta t}_M]f(S^{\Delta t}_M)\, \phi \,dx\,dz\,dt \rightarrow  \int_0^T \int_{\Omega}  q\, \phi \,dx\,dz\,dt,
 \label{eq:B-Wfweak}
\end{align}
for all test functions $\phi\in L^2(0,T;V_m(\Omega))$ as $m,\,N\rightarrow \infty$. Since $\cup_{m\in\mathbb{N}}V_m(\Omega)$ is dense in $H_0^1(\Omega)$, \eqref{eq:B-Wfweak} holds for all test functions $\phi\in L^2(0,T;H_0^1(\Omega))$. To identify $q$, we take a test function $\phi\in L^{\infty}(0,T;C_0^1(\Omega))$ in \eqref{eq:B-Wfweak}. Then we have
\begin{align}
 &\int_0^T \int_{\Omega}  \big(W[S^{\Delta t}_M]f(S^{\Delta t}_M)-  W[S]f(S)\big)\phi\,dx\,dz\,dt \nonumber\\&= \int_0^T \int_{\Omega}  \Big( W[S^{\Delta t}_M]\left(f(S^{\Delta t}_M)-f(S)\right) +  f(S)\left(W[S^{\Delta t}_M]- W[S]\right)\Big)\phi\,dx\,dz\,dt . 
 \label{eq:B-fW1}
\end{align}
The choice of the test function implies $\phi\in L^{\infty}(\Omega_T)$. Thus, the growth condition on $W$ in Lemma \ref{lem:B-more-assumptions}.3, Lemma \ref{lem:Bapriori1}, H\"older's inequality, and the strong convergence of $f$ in \eqref{eq:B-strong-nonlinear} lead to 
\begin{align}
 \int_0^T \int_{\Omega}  W[S^{\Delta t}_M]&\left(f(S^{\Delta t}_M)-f(S)\right)\phi\,dx\,dz\,dt\nonumber \\ & \leq~ \Vert\phi\Vert_{L^{\infty}(\Omega_T)}\Vert W[S^{\Delta t}_M]\Vert_{L^2(\Omega_T)} \Vert f(S^{\Delta t}_M)-f(S)\Vert_{L^2(\Omega_T)}\nonumber\\ & \rightarrow~ 0.
 \label{eq:B-fW2}
\end{align}
The weak convergence of $W$ in \eqref{eq:B-weak-W} implies
\begin{align}
\int_0^T \int_{\Omega} f(S)\phi \left(W[S^{\Delta t}_M]- W[S]\right)\,dx\,dz\,dt\rightarrow 0. 
\label{eq:B-fW3}
\end{align}
Substituting \eqref{eq:B-fW2} and \eqref{eq:B-fW3} into \eqref{eq:B-fW1} yields
\begin{align}
 \int_0^T \int_{\Omega} W[S^{\Delta t}_M]f(S^{\Delta t}_M)\phi\,dx\,dz\,dt\rightarrow\int_0^T \int_{\Omega} W[S]f(S)\phi\,dx\,dz\,dt.
 \label{eq:B-weak-Wf}
\end{align}
By the uniqueness of the limit we obtain $q= W[S]f(S)\in L^2(\Omega_T)$. 

The existence of a function $S\in L^2(0,T;H_0^1(\Omega))$ with $\partial_t S\in L^2(0,T;H_0^1(\Omega))$ and the convergence results \eqref{eq:Bweakconv}, \eqref{eq:B-strong-nonlinear2}, and \eqref{eq:B-weak-Wf} imply that equation \eqref{eq:B-weakdiscreate} converges as $m\rightarrow \infty$ and $\Delta t\rightarrow 0$ to 
\begin{align}
\int_{0}^{T} \int_{\Omega} \partial_t S \phi\,dx\,dz\,dt-& \int_0^{T}\int_{\Omega} U[S]f(S)\partial_x \phi \,dx\,dz\,dt - \int_0^{T}\int_{\Omega} W[S]f(S)\partial_z \phi \,dx\,dz\,dt\nonumber \\ &+\int_{0}^{T} \int_{\Omega}\nabla S\cdot\nabla \phi + \beta \nabla \partial_t S\cdot \nabla \phi\,dx\,dz\,dt=0,
\label{eq:B-weak}
\end{align}
for all test function $\phi\in L^2(0,T;H_0^1(\Omega))$. Hence, the function $S$ satisfies the first condition in Definition \ref{def:Bweaksolution}. \\
\indent Now, we show that the function $S\in H^1(0,T;H_0^1(\Omega))$ satisfies the weak incompressibility equation in Definition \ref{def:Bweaksolution}.
We choose a test function $\phi\in C_0^2(\Omega_T)$, then using \eqref{eq:B-WderU}, we have
\begin{align}
  \int_{0}^{T}\int_{\Omega} W[S]\partial_z\phi \,dx\,dz\,dt=-\int_{0}^{T}\int_{\Omega}\partial_x\int_0^z U[S](x,r,t)\,dr\partial_z\phi\,dx\,dz\,dt.
\end{align}
Applying Gauss' theorem to the right side of the above equation twice yields
\begin{align}
 \int_{0}^{T}\int_{\Omega} W[S]\partial_z\phi \,dx\,dz\,dt=& \int_{0}^{T}\int_{\Omega}\int_0^z U[S](x,r,t)\,dr\,\partial^2_{zx}\phi\,dx\,dz\,dt,\nonumber\\=& - \int_{0}^{T}\int_{\Omega}\partial_z\int_0^z U[S](x,r,t)\,dr\,\partial_{x}\phi\,dx\,dz\,dt.
\label{eq:B-weakincomp1}
\end{align}
For fixed but arbitrary $x\in(0,1)$ and $t\in(0,T)$ we have $\int_0^z U[S](x,r,t)\,dr\in H^2((0,1))$ for any $z\in(0,1)$. Hence, for $\Delta z>0$ applying the Taylor expansion yields
\begin{align*}
 \int_{0}^{T}&\int_{\Omega}\partial_z\int_0^z U[S](x,r,t)\,dr\,\partial_{x}\phi\,dx\,dz\,dt\\ =~ &\frac{1}{\Delta z}  \int_{0}^{T}\int_{\Omega} \Bigl(\int_{0}^{z+\Delta z}U[S](x,r,t)\,dr-\int_{0}^{z}U[S](x,r,t)\,dr\Bigr)\,\partial_{x}\phi\,dx\,dz\,dt+ \mathcal{O}(\Delta z),\\
 =& ~ \frac{1}{\Delta z}  \int_{0}^{T}\int_{\Omega} \int_{z}^{z+\Delta z}U[S](x,r,t)\,dr\partial_{x}\phi\,dx\,dz\,dt + \mathcal{O}(\Delta z).
\end{align*}
Letting $\Delta z\rightarrow 0$, we obtain
\begin{align}
  \int_{0}^{T}\int_{\Omega}\Big(\partial_z\int_0^z U[S](x,r,t)\,dr\Big)\,\partial_{x}\phi\,dx\,dz\,dt=\int_{0}^{T}\int_{\Omega} U[S](x,z,t)\,\partial_{x}\phi\,dx\,dz\,dt.
\label{eq:B-weakincomp2}
\end{align}
Substituting \eqref{eq:B-weakincomp2} into \eqref{eq:B-weakincomp1} yields the required weak incompressibility equation in Definition \ref{def:Bweaksolution}.

Finally, we show that $S(0)=S^0$ almost everywhere. Choosing a test function $\phi\in C^1([0,T],H_0^1(\Omega))$ in \eqref{eq:B-weak} such that $\phi(T)=0$, then applying Gauss' theorem to the first term in equation \eqref{eq:B-weak} yields
\begin{align}
&\int_{0}^{T} \int_{\Omega} S \partial_t \phi\,dx\,dz\,dt- \int_0^{T}\int_{\Omega} U[S]f(S)\partial_x \phi \,dx\,dz\,dt - \int_0^{T}\int_{\Omega} W[S]f(S)\partial_z \phi \,dx\,dz\,dt\nonumber \\ &+\int_{0}^{T} \int_{\Omega}\nabla S\cdot\nabla \phi + \beta \nabla \partial_t S\cdot \nabla \phi\,dx\,dz\,dt=\int_{\Omega} S(0) \phi(0)\,dx\,dz.
\label{eq:B-weak1}
\end{align}
Applying summation by parts to the first term in equation \eqref{eq:B-weakdiscreate} yields
\begin{align}
\dfrac{1}{\Delta t} &\int_{0}^{T} \int_{\Omega} \left(\phi(t)-\phi(t-\Delta t) \right)S^{\Delta t}_M\,dx\,dz\,dt\nonumber\\&- \int_0^{T}\int_{\Omega} U[S^{\Delta t}_M]f(S^{\Delta t}_M)\partial_x \phi \,dx\,dz\,dt - \int_0^{T}\int_{\Omega} W[S^{\Delta t}_M]f(S^{\Delta t}_M)\partial_z \phi \,dx\,dz\,dt\nonumber \\ &+\int_{0}^{T} \int_{\Omega}\nabla S^{\Delta t}_M\cdot\nabla \phi + \dfrac{\beta}{\Delta t} \left(\nabla\phi(t)-\nabla\phi(t-\Delta t) \right)\cdot \nabla S^{\Delta t}_M\,dx\,dz\,dt\nonumber\\&= \int_{\Omega} S_m^0\phi(0)\,dx\,dz.
\label{eq:B-initial}
\end{align}
Letting $m\rightarrow \infty$ and $\Delta t\rightarrow 0$, equation \eqref{eq:B-initial} converges, up to a subsequence, to
\begin{align}
&\int_{0}^{T} \int_{\Omega} S \partial_t \phi\,dx\,dz\,dt- \int_0^{T}\int_{\Omega} U[S]f(S)\partial_x \phi \,dx\,dz\,dt - \int_0^{T}\int_{\Omega} W[S]f(S)\partial_z \phi \,dx\,dz\,dt\nonumber \\ &+\int_{0}^{T} \int_{\Omega}\nabla S\cdot\nabla \phi + \beta \nabla \partial_t S\cdot \nabla \phi\,dx\,dz\,dt=\int_{\Omega} S^0 \phi(0)\,dx\,dz,
\label{eq:B-weak2}
\end{align}
since $S_m^0\rightarrow S^0$ in $L^2(\Omega)$ as $m\rightarrow \infty$. As $\phi(0)$ is arbitrarily chosen, comparing equation \eqref{eq:B-weak1} and \eqref{eq:B-weak2} yields that $S(0)=S^0$ almost everywhere. Hence, the function $S$ satisfies the third condition in Definition \ref{def:Bweaksolution}, which implies that $S$ is a weak solution of the initial boundary value problem \eqref{eq:model}, \eqref{eq:velocity}, \eqref{eq:incompressible} and \eqref{eq:Bibc}.
\end{proof}

\begin{remark}
\begin{enumerate}
 \item Proving uniqueness of weak solutions for the initial boundary value problem \eqref{eq:model}, \eqref{eq:velocity}, \eqref{eq:incompressible} and \eqref{eq:Bibc} requires that weak solutions satisfy $\partial_x S \in L^{\infty}(\Omega_T)$. However, proving this property is still unfeasible as the regularization theory in \cite{Gilbarg,Ladyz} is not applicable. 
 
 \item Uniqueness can be guaranteed for the initial boundary value problem \eqref{eq:model}, \eqref{eq:velocity}, \eqref{eq:incompressible} and \eqref{eq:Bibc} with a linear choice of the fractional flow function $f(S)=S$ and the horizontal velocity component $U(S)=S$ under the assumption that weak solutions satisfy $\partial_x S \in L^{r}(\Omega_T),\,r>2$.
\end{enumerate}
 \end{remark}

\section{Numerical Examples}
\label{sec:numerical}
In this section, we investigate using numerical examples the effect of letting the regularization parameter $\beta \rightarrow 0$ in the BVE-model tends to zero. In addition, we show the difference between solutions of the zero-limit of the BVE-model \eqref{eq:model}, \eqref{eq:velocity} and those of the DVE-model \eqref{eq:YortsosModel}, \eqref{eq:velocity} (the BVE-model with $\beta=0$). 

\begin{remark}
Note that the BVE-model \eqref{eq:model}, \eqref{eq:velocity} reduces to the DVE-model \eqref{eq:YortsosModel}, \eqref{eq:velocity} as the regularization parameter $\beta \rightarrow 0$. However, the estimates in Section \ref{sec:a priori} depend on $\beta$ and blow up as $\beta\rightarrow 0$, in particular the estimates on $\nabla S$. Therefore, saturation in the limit $\beta\rightarrow 0$ is not expected to have enough regularity to be a standard weak solution the DVE-model \eqref{eq:YortsosModel}, \eqref{eq:velocity}.  
\end{remark}
 
For the numerical examples, we consider the BVE-model \eqref{eq:model}, \eqref{eq:velocity} with a nonlinear diffusion function $H=H(S)$ such that
\begin{eqnarray}
\partial_{t}S +\partial_{x}\Bigl(f(S)U[S]\Bigr)+\partial_{z}\Bigl(f(S)W[S]\Bigr)-\beta \nabla\cdot\Big(H(S)\nabla S \Big) -\beta^2 \Delta \partial_{t}S=0
\label{eq:BVE}
\end{eqnarray}
in $\Omega\times(0,T)$. Here, 
\begin{equation}
U[S]=\dfrac{\lambda_{tot}(S)}{\int_{0}^{1}\lambda_{tot}(S) dz},\quad \quad W[S]= -\partial_{x}\int_{0}^{z}U[S(\cdot,r,\cdot)]dr,
\label{eq:velocity1}
\end{equation}
and 
\begin{align}
 f(S)= \frac{MS^2}{MS^2+(1-S)^2}, \quad\quad H(S)= \frac{MS^2(1-S)^2}{MS^2+(1-S)^2},
 \label{eq:diffusion}
\end{align}
where $M$ is the viscosity ratio of the defending phase and the invading phase. We also consider the initial and boundary conditions
\begin{align}
\begin{array}{rll}
 S(\cdot,\cdot,0)&=S_{0}, &\text{ in } \Omega, \\
 S&=S_{\text{inflow}}, &\text{ on } \{0\}\times (0,1)\times [0,T],\\
 W&= 0,  &\text{ on }(0,1)\times\{0,1\}\times [0,T].
\end{array}
\label{eq:IBC}
\end{align}
Note that the second condition in \eqref{eq:IBC} corresponds to a steady flow at the left boundary of the domain, however, the third condition corresponds to impermeable upper and lower boundaries of the domain. In the following examples we choose the initial condition
\begin{align*}
 S_0(x,z)=g(x)S_{\text{inflow}}(z),
\end{align*}
where \begin{align}
 g(x)=\dfrac{(1-x)^2}{10^5x^2+(1-x)^2}\quad\text{ and }\quad S_{\text{inflow}}(z)=\left\{ 
\begin{array}{c l l}
	0 \quad &: & z\leq \frac{1}{4} \text{ and } z>\frac{3}{4},\\
	0.9 \quad &: &\frac{1}{4}<z\leq \frac{3}{4}.
\end{array} \right.
\label{eq:inflow}
 \end{align}

We discretize the nonlocal BVE-model \eqref{eq:BVE}, \eqref{eq:velocity1} by applying a mass-conservative finite-volume scheme as described in \cite{Armiti-Juber2018}. The scheme is based on a Cartesian grid with number of vertical cells $N_z$ significantly less than that in the horizontal direction $N_x$ that fits to the case of flat domains. In the following two examples, we use a grid of $2000\times 40$ elements, viscosity ratio $M=2$ and end time $T=0.5$.

\begin{figure}
\centering
\subfigure[$\beta^2=10^{-2}$]{
\includegraphics[scale=0.38]{./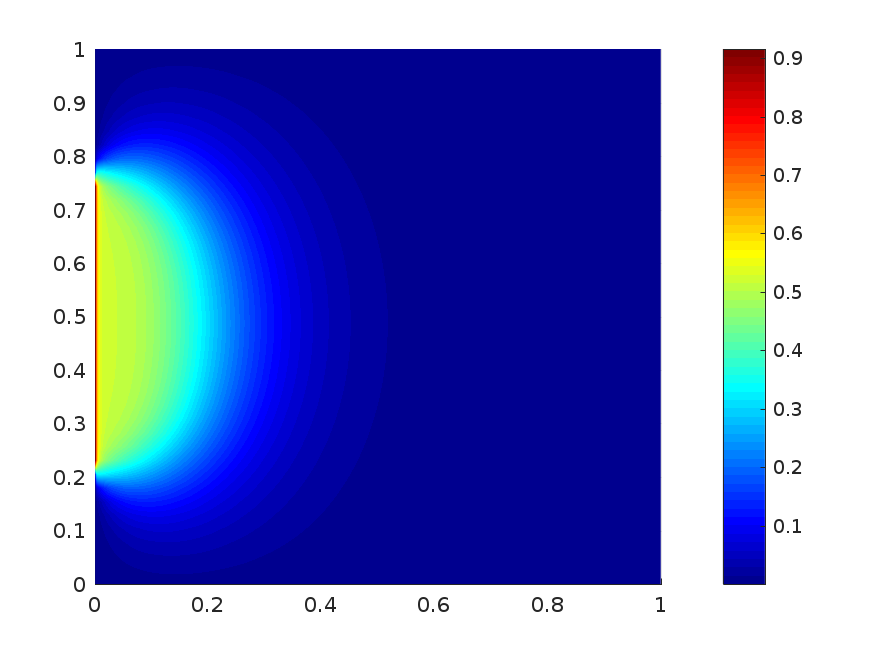}
\label{subfig:beta1}
}\hspace{-0.1cm}
\subfigure[$\beta^2=10^{-3}$]{
\includegraphics[scale=0.38]{./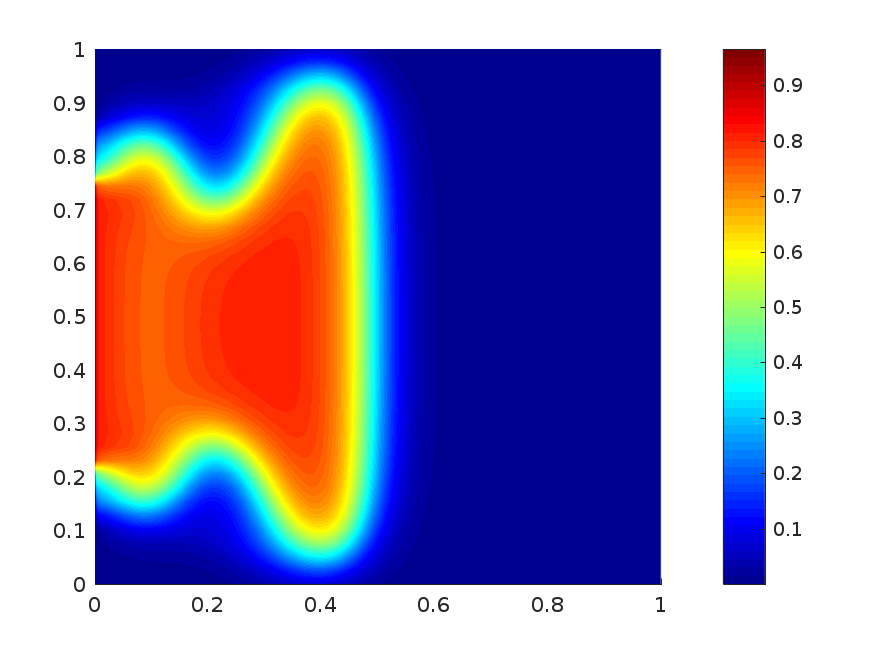}
\label{subfig:beta2}
}\\
\subfigure[$\beta^2=10^{-4}$]{
\includegraphics[scale=0.38]{./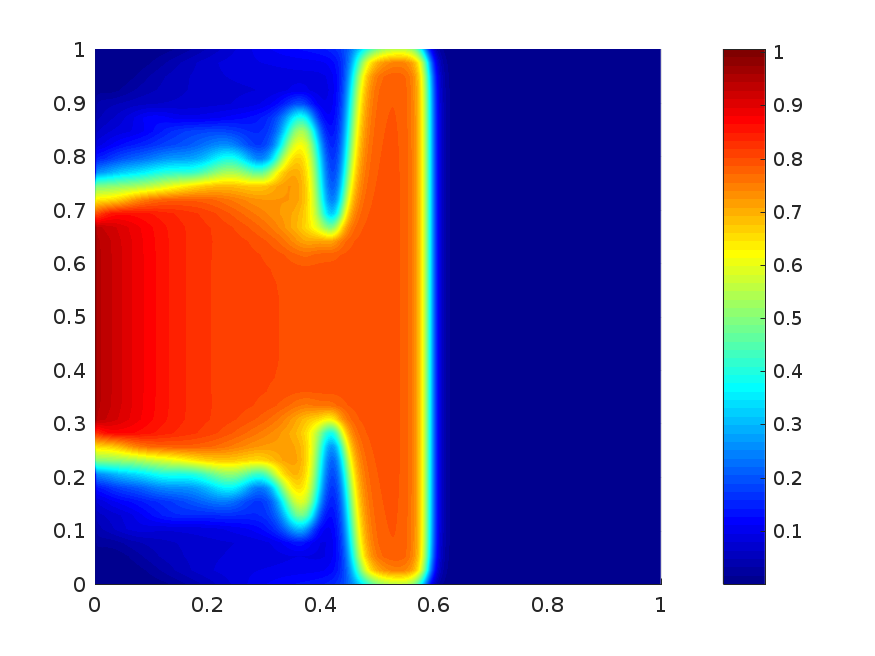}
\label{subfig:beta3}
}\hspace{-0.1cm}
\subfigure[$\beta^2=10^{-5}$]{
\includegraphics[scale=0.38]{./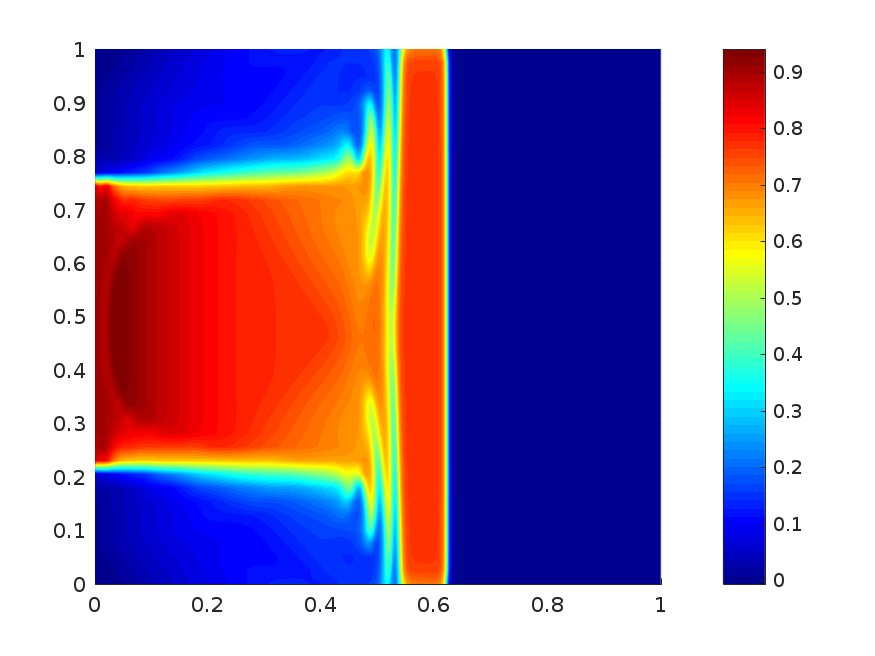}
\label{subfig:beta4}
}
\caption{Numerical solutions of the BVE-model \eqref{eq:BVE}, \eqref{eq:velocity1} for decreasing parameters $\beta^2\in\{10^{-2},\,10^{-3},\,10^{-4},\,10^{-5}\}$ using a $2000\times40$ grid, $M=2$ and $T=0.5$.}
\label{fig:comparison1}
\end{figure}

\textbf{Example 1:} In this example, we show the effect of reducing the regularization parameter $\beta\rightarrow 0$ on the numerical solutions of the BVE-model \eqref{eq:BVE}, \eqref{eq:velocity1}. In Figures \ref{subfig:beta1}-\ref{subfig:beta4}, we present the numerical solutions of the BVE-model \eqref{eq:BVE}, \eqref{eq:velocity1} using the parameters $\beta^2\in\{10^{-2},\,10^{-3},\,10^{-4},\,10^{-5}\}$, respectively. The results in Figure \ref{fig:comparison1} show a high diffusional effect on saturation solution for $\beta^2=10^{-2}$ that decreases with $\beta$. In fact, it is noticeable that the saturation consists of a sharp moving front as $\beta\rightarrow 0$. This result matches with the a priori estimates on $\nabla S$ in Section \ref{sec:a priori}, which blow up as $\beta\rightarrow 0$.

\textbf{Example 2:} As the quasi-parabolic BVE-model \eqref{eq:BVE}, \eqref{eq:velocity1} reduces to the nonlocal transport equation \eqref{eq:YortsosModel}, \eqref{eq:velocity} proposed in \cite{Yortsos} when $\beta \rightarrow 0$, we show in this example that numerical solution of the BVE-model with $\beta\rightarrow 0$ differs from that of the DVE-model. In Figure \ref{subfig:BVE} we present the numerical solution of the BVE-model with $\beta^2=10^{-6}$, while in Figure \ref{subfig:DVE} we show the numerical solution of the DVE-model.

In contrast to the DVE-model, Figure \ref{fig:comparison2} shows that the BVE-model describes saturation overshoots. In addition, the spreading speed of the inflowing fluid using the BVE-model is smaller than that using the DVE-model. This is a consequence of the saturation overshoots phenomenon, which is mathematically identified by undercompressive waves that are known to be slower than classical compressive waves. This result was expected by Yortsos and Salin in \cite{YortsosSalin}, where they developed different selection principles on finding upper bounds on the speed of the mixing zone in the case for miscible displacement.  

\begin{figure}
\centering
\subfigure[$\beta^2=10^{-6}$]{
\includegraphics[scale=0.38]{./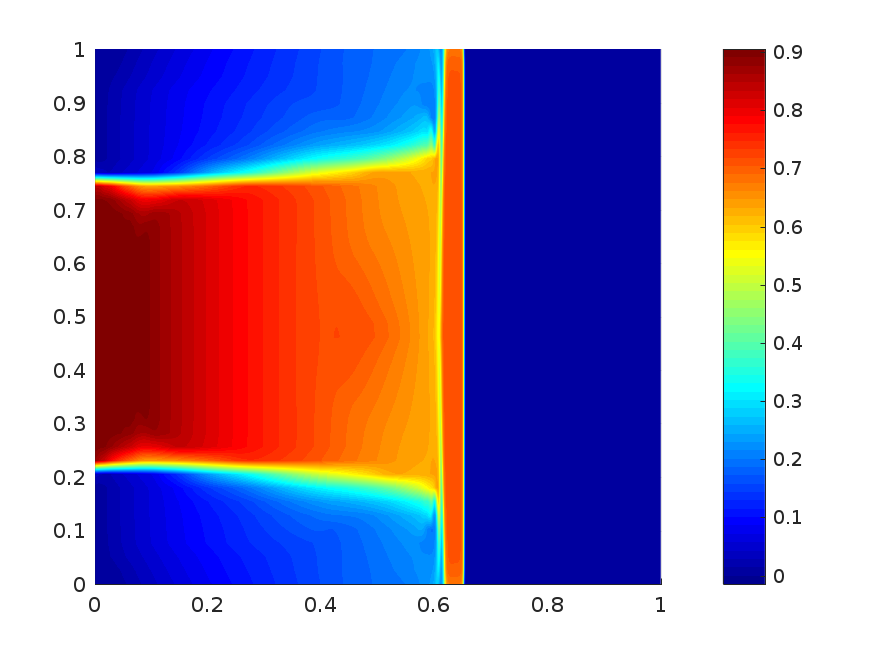}
\label{subfig:BVE}
}\hspace{-0.1cm}
\subfigure[$\beta^2=0$]{
\includegraphics[scale=0.38]{./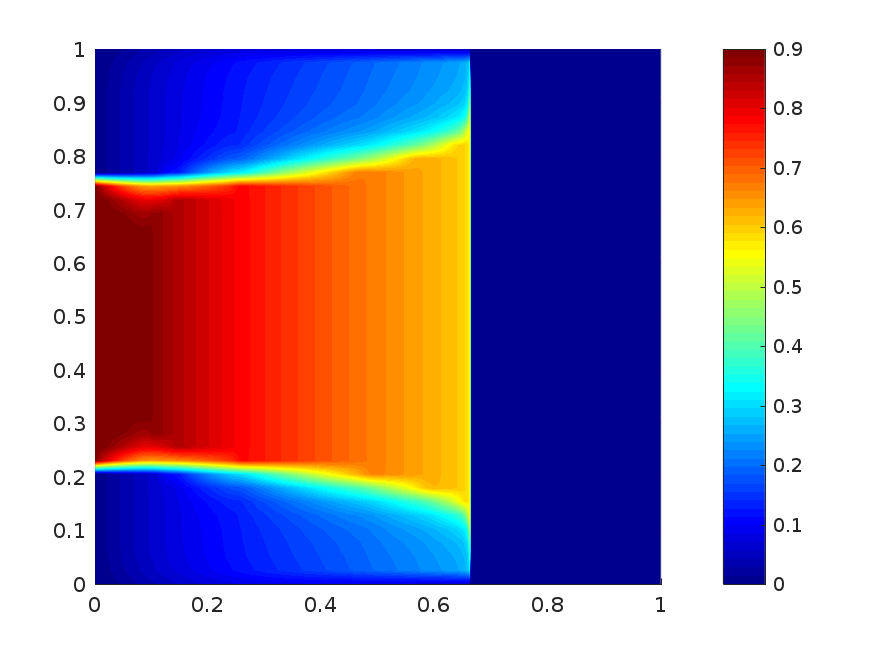}
\label{subfig:DVE}
}
\caption{Numerical solutions of the BVE-model \eqref{eq:BVE} for $\beta^2=10^{-6}$ in (a) and for $\beta=0$ in (b), using a $2000\times40$ grid, $M=2$ and $T=0.5$.}
\label{fig:comparison2}
\end{figure}

%
%
%
%
%

\bibliographystyle{plain}
\bibliography{Existence-Brinkman}

\end{document}